\newtheorem{theorem}{Theorem}[section]
\newtheorem{prop}[theorem]{Proposition}
\newtheorem{lemma}[theorem]{Lemma}
\newtheorem{defn}[theorem]{Definition}
\newtheorem{rem}[theorem]{Remark}
\numberwithin{equation}{section}
\def\R{\mathbb{R}}
\def\C{\mathbb{C}}
\def\N{\mathbb{N}}
\def\e{\varepsilon}
\def\eps{\epsilon}
\def\n{\nu}
\def\D{\Delta}
\def\O{\Omega}
\def\L{\Lambda}
\def\a{\alpha}
\def\g{\gamma}
\def\d{\delta}
\def\f{\varphi}
\def\k{\kappa}
\def\l{\lambda}
\def\z{\zeta}
\def\H{\mathcal{H}}
\def\Dd{\mathbb{D}}
\def\X{\mathcal{X}}
\def\n{\nabla}
\def\ind{\mathrm{ind}}
\def\dim{\mathrm{dim}}
\def\supp{\mathrm{supp}\,}
\newcommand{\chrc}[1]{\mathrm{1}_{#1}}
\renewcommand{\div}{\text{div}}
\def\de{\partial}
\def\XXint#1#2#3{{\setbox0=\hbox{$#1{#2#3}{\int}$ }
\vcenter{\hbox{$#2#3$ }}\kern-.6\wd0}}
\title[Free boundaries of finite index]{One-phase free boundary solutions of finite Morse index}
\author{Jos\'e Basulto}
\address{Jos\'e Basulto, Facultad de Matem\'aticas, Pontificia Universidad Cat\'olica de Chile, Avenida Vicu\~na Mackenna 4860, Santiago 7820436, Chile}
\email{jos.basulto@uc.cl}
\author{Nikola Kamburov}
\address{Nikola Kamburov, Facultad de Matem\'aticas, Pontificia Universidad Cat\'olica de Chile, Avenida Vicu\~na Mackenna 4860, Santiago 7820436, Chile}
\email{nikamburov@uc.cl}
\thanks{The authors were partially supported by ANID Fondecyt Regular No.\ 1201087.}
\begin{document}

\begin{abstract}

We study global solutions to the classical one-phase free boundary problem that have finite Morse index relative to the Alt-Caffarelli functional.  We show that such solutions are stable outside a compact set and characterize the index as the  maximal number of linearly independent $L^2$ integrable eigenfunctions of the corresponding Robin eigenvalue problem, associated to negative eigenvalues. As an application, we obtain a complete classification of global solutions of finite Morse index in the plane. Our results are counterparts to the minimal surface theorems of Fischer-Colbrie and Gulliver.
\end{abstract}

\keywords{free boundaries, Morse index, minimal surfaces}
\subjclass[2020]{35R35, 35B08, 35B35, 35J25}

\bibliographystyle{alpha}
\maketitle

\section{Introduction}

The investigation into the solutions of the classical one-phase free boundary problem (FBP) in a bounded domain $D\subset \R^n$:
  \begin{equation}\label{FBP}
\left\{
\begin{aligned}
 u\geq 0 & \quad \mbox{in } D, \\
   \Delta  u=0 & \quad \mbox{in } D^+(u):=\{x \in D: u(x)>0\}, \\
   |\nabla u|=1 & \quad \mbox{on }  F(u):=\de D^+(u) \cap \O,
  \end{aligned}\right.
  \end{equation}
with fixed boundary conditions on $\de D$, was spurred by the highly influential 1981 paper \cite{AC} of Alt and Caffarelli. 
Motivated by interface models in fluid mechanics and materials science (\cite{FriedBook}), the authors formulated the energy functional
\begin{equation}\label{eq:OPfunc}
I(v, D):=\int_D \left(|\n v|^2 + \chrc{(0,\infty)}(v)\right) dx, \qquad v:D\to [0,\infty),
\end{equation}
and showed that its minimizers $u$ satisfy \eqref{FBP} in a suitable weak sense, with the \emph{free boundary} $F(u)$ being smooth except on a relatively closed subset of zero $(n-1)$-dimensional Hausdorff measure. The methods behind this regularity result were inspired by earlier developments in minimal surface theory and the paper revealed fascinating analogies between minimal surfaces and free boundaries. In particular, the regularity theory for both objects is closely linked to the classification of \emph{globally defined} critical points of the corresponding energy functional. 

The focus of this article is on global classical solutions $u:\R^n \to [0,\infty)$ of the one-phase FBP that are \emph{not necessarily energy minimizing} (see Definition \ref{def:class}). Such higher-order critical points are of interest in fluid mechanics (\cite{BSS76, TraizetHV}) and in electrostatics (\cite{sirakov2002}). 
One way to delimit the much wider study of general global solutions to \eqref{FBP} is to impose a \emph{topological} restriction of the positive phase $\O:=\{u>0\}$. In $n=2$ dimensions, the topological classification problem was first raised by Hauswirth, H\'elein and Pacard \cite{HHP}, who conjectured that, up to similarity transformations, the list of solutions to \eqref{FBP} in $\R^2$, for which $\O$ is connected and has \emph{finite topology}, that is, for which $F(u)=\de \O$ consists of finitely many components, consists of
\begin{itemize}
\item the \textit{one-plane solution} $P(x_1,x_2):=x_1^+$,
\item the \textit{disk-complement solution} $L(x_1,x_2):=(\log|(x_1,x_2)|)^+$,
\item and the solution $H(x_1, x_2)$, discovered by the authors in the same paper \cite{HHP}. We refer to it as the \textit{double hairpin solution}, given that its positive phase $\{H>0\}$ is a region trapped between the two hairpin-like catenary curves: $$\{(x_1,x_2)\in \R^2: x_2=\pm (\pi/2 + \cosh x_1)\}.$$
\end{itemize}
Khavinson, Lundberg and Teodorescu \cite{KLT13} confirmed that $P$ and $H$ are the only solutions with simply connected positive phase in $\R^2$. The full conjecture was resolved in the affirmative by Traizet \cite{Traizet} who established a remarkable correspondence between solutions of \eqref{FBP} in the plane and minimal surfaces in $\R^3$. The list is certainly not exhaustive of all the possible global solutions in $\R^2$: there exists a family of periodic solutions with infinite topology (\cite{BSS76, JK19}). In dimension $n\geq 3$, in addition to the solutions above, extended in constant fashion in the remaining $(n-2)$ directions, one expects a zoo of global solutions of finite topology, starting from the axially symmetric solution constructed by Liu, Wang and Wei \cite{LWW21}. 

A second natural approach is to instead consider a variation-theoretic constraint on the space of global solutions. For a classical solution $u$ of \eqref{FBP} in a domain $D\subseteq \R^n$, the \emph{second variation form} of the Alt-Caffarelli energy $I(\cdot, D)$ at $u$ is given by (\cite{CJK})
\begin{equation}\label{eq:secvar}
\begin{aligned}
Q(\phi, \phi) & :=\int_{D^+(u)} |\n \phi|^2 \, dx - \int_{F(u)} H \phi^2 \, d\H^{n-1}, \quad \text{for } \phi \in C^\infty_c(D^+(u)\cup F(u)), 
\end{aligned}
\end{equation}
where $H$ denotes the mean curvature of $F(u)$ with respect to the outer unit normal $\nu = - \n u$. The \emph{Morse index} of $u$ in $D$ with respect to the Alt-Caffarelli energy $I(\cdot, D)$ is then defined to be the index of the quadratic form $Q$, that is
\begin{equation}\label{eq:indexdef}
\begin{split}
\ind(u, D^+(u)):=\sup\{ \text{dim } V: Q(\cdot,\cdot) &\text{ is negative definite} \\ & \text{ on the subspace } V \subseteq C^\infty_c(D^+(u)\cup F(u))\},
\end{split}
\end{equation}
with $\ind(u,D^+(u))=0$ meaning that $u$ is \emph{stable} in $D$. In this paper, we will be interested in the global solutions $u:\R^n\to [0,\infty)$ of the one-phase FBP that possess \emph{finite index} 
$$
\ind(u):=\ind(u,[\R^n]^+(u)) <\infty.
$$
Our main result is that in the case of the plane $\R^2$, solutions of finite index also have finite topology. As a result, we can characterize all global solutions in $\R^2$ of finite index according to its value. 

\begin{theorem}\label{thm:class} Let $u$ be a global classical solution of the one-phase FBP \eqref{FBP} in $\R^2$ that has connected positive phase $\O:=\{u>0\}$ and finite index. Then $u$ has finite topology. Thus, it is either the one-plane solution $u=P$, the disk-complement solution $u=L$, or the double hairpin solution $u=H$, up to a similarity transformation. In the first instance, the solution is stable, while in the remaining two, the index of $u$ is $1$. 
\end{theorem}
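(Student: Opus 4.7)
The plan is to combine two results developed earlier in the paper — that every finite-index solution is stable outside a compact set, and the characterization of the index in terms of negative $L^2$ eigenvalues of the Robin eigenvalue problem — with Traizet's classification of planar one-phase solutions of finite topology. The argument proceeds in three stages: first establish finite topology, then identify $u$ via \cite{Traizet}, then compute the three indices.

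For the finite topology step, let $K\subset \R^2$ be compact such that $u$ is stable in $\R^2\setminus K$. Suppose for contradiction that $F(u)=\de\Omega$ has infinitely many connected components; then infinitely many lie entirely in $\R^2\setminus K$. These fall into two geometric types: bounded closed curves enclosing either a bounded component of $\Omega$ or a bounded hole of $\Omega^c$, and unbounded curves contributing distinct topological ends of $\Omega$. In either case my plan is to construct test functions supported in disjoint neighborhoods of distinct components — for example, truncations of first Robin eigenfunctions on bounded enclosed regions, or cutoffs of explicit negative-direction perturbations along each end — each of which makes $Q$ strictly negative. Infinitely many such mutually disjoint-support test functions would then violate stability on $\R^2\setminus K$. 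Hence $F(u)$ has finitely many components, so $u$ has finite topology, and Traizet's theorem then identifies $u$ up to similarity as $P$, $L$, or $H$.

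For the index counts, I invoke the Robin eigenvalue characterization: the eigenvalue problem is $-\Delta\psi=\lambda\psi$ in $\Omega$ with $\de_\nu\psi=H\psi$ on $F(u)$, and $\ind(u)$ equals the number of negative eigenvalues admitting $L^2(\Omega)$ eigenfunctions. For $P$: $F(u)$ is a line, $H\equiv 0$, and the problem reduces to the Neumann Laplacian on a half-plane, whose spectrum admits no negative $L^2$ eigenfunctions; thus $\ind(P)=0$. For $L$: with $\Omega=\{|x|>1\}$ and $F(u)$ the unit circle, separation of variables $\psi(r,\theta)=\rho(r)e^{ik\theta}$ reduces the problem to a family of radial ODEs indexed by $k\in\Z$. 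Only the radial mode $k=0$ admits a negative eigenvalue with $L^2$ eigenfunction (an explicit modified Bessel function $K_0(\mu r)$ with $\mu>0$ fixed by the Robin condition at $r=1$), while modes $|k|\geq 1$ contribute a strictly positive angular term to $Q$; hence $\ind(L)=1$. For $H$: translational invariance of the double hairpin in the $x_1$-direction permits Fourier decomposition in $x_1$, reducing the eigenvalue problem to a one-dimensional Sturm--Liouville problem on a fundamental segment. The scaling direction yields a single negative $L^2$ eigenfunction, the translation Jacobi field gives a zero mode, and no further negative directions arise, so $\ind(H)=1$.

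The main obstacle is the finite topology step. Deducing it from stability outside a compact set is the free-boundary analog of the minimal-surface theorems of Fischer-Colbrie and Gulliver, and is expected to be the most technically demanding part of the argument. The delicate point is to show quantitatively that each additional component of $F(u)$ in $\R^2\setminus K$ must support a test function yielding strictly negative $Q$, and that such test functions can be chosen with pairwise disjoint supports — this will rely on structural results for stable one-phase solutions in exterior planar domains and on precise control of the boundary-curvature term in the second variation.
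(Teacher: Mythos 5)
Your overall architecture matches the paper's: finite index $\Rightarrow$ stable outside a compact set $\Rightarrow$ finite topology $\Rightarrow$ Traizet's classification $\Rightarrow$ compute the three indices. But there are two substantive problems, one in the finite-topology step and one in the index computation for $H$.

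\textbf{Finite topology.} Your plan is to associate to each free-boundary component in the stable exterior region a test function with $Q<0$, chosen so that different components get disjointly supported test functions, and then contradict stability. This is not the route the paper takes, and it is not clear it can be made to work: a closed convex free-boundary curve in the stable region need not support a \emph{locally} supported negative direction (the boundary term $\int_\gamma H\phi^2$ integrates to $2\pi$ for $\phi\equiv1$ on $\gamma$, but driving $\int|\nabla\phi|^2$ below $2\pi$ requires the cutoff to spread over a large region, which destroys disjointness of supports when components are close together). The missing ingredient is the \emph{finite total mean curvature bound}: using stability in $\R^2\setminus \overline{B_{\rho/2}}$ against a single logarithmic cutoff, one shows $\int_{F(u)\setminus \overline{B_{2\rho}}}H\,d\H^1\le 3\pi$ (Lemma~\ref{lem:meancurvest}), and hence $\int_{F(u)}H<\infty$. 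That bound is what actually kills infinitely many components (Proposition~\ref{prop:fincurvtop}): a closed component would contribute turning $2\pi$, and each unbounded component outside a large ball forces its adjacent zero set to contain a wide sector, of which the plane can hold at most two. You should replace your test-function scheme with this curvature estimate; it is the genuine analogue of the Fischer-Colbrie/Gulliver step you name.

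\textbf{Index of $H$.} Your plan for the double hairpin rests on ``translational invariance of the double hairpin in the $x_1$-direction.'' This is false: the positive phase of $H$ is the region between the catenary curves $x_2=\pm(\pi/2+\cosh x_1)$, which is not invariant under translations in $x_1$ (the periodic solutions with infinite topology are the translation-invariant ones). Consequently there is no Fourier reduction in $x_1$, and the Sturm--Liouville reduction you describe does not exist. The paper instead exploits that $G=2\partial_z u$ maps $\overline{\O}$ conformally onto $\overline{\Dd}$ minus finitely many points, and that the second variation form transforms exactly to $Q_0(\psi,\psi)=\int_\Dd|\nabla\psi|^2-\int_{\partial\Dd}\psi^2$ (Lemma~\ref{lem:coordchange} and Proposition~\ref{prop:relind}); the index of $Q_0$ is then computed to be $1$ by a Fourier expansion on $\partial\Dd$ (Lemma~\ref{lem:calcindex}). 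This handles $L$ and $H$ simultaneously and avoids any structure-specific separation of variables. Relatedly, your separation-of-variables sketch for $L$ finds \emph{a} radial negative mode via $K_0$ but does not establish that it is the only one, nor does it verify the second half of Proposition~\ref{prop:L2} (that $Q\ge0$ on the $L^2$-orthogonal complement of the eigenspace), both of which are needed to conclude $\ind(L)=1$; the conformal reduction sidesteps these issues entirely.
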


The theorem above is the free boundary analogue of the celebrated result of Fischer-Colbrie \cite{FCminimalFM} and Gulliver \cite{Gul1986} that complete minimal surfaces of finite Morse index in $\R^3$ actually have finite topology. In the exact same spirit, the intermediate step to establishing Theorem \ref{thm:class} is proving that solutions of \eqref{FBP} of finite index enjoy a \emph{finite total mean curvature bound}
\begin{equation}\label{eq:fincurvintro}
\int_{F(u)} |H|\, d\H^1 <\infty,
\end{equation}
which, in turn, entails the finite connectivity of $F(u) = \de\O$. The computation of the precise value of the index for each of the nontrivial solutions $u=L$ and $u=H$ is then carried out using the conformal equivalence of $\overline{\O}$ to the unit disk $\overline{\Dd}$ minus a finite number of points -- in another nod to the minimal surface story. 

The curvature bound \eqref{eq:fincurvintro} follows from the fact that (in any dimension) a global solution of finite index must be stable outside a compact set (Proposition \ref{prop:stab}). In order to prove this statement in our free boundary setting, we require some basic spectral results for the associated local Dirichlet-Robin eigenvalue problem
  \begin{equation}\label{eq:eig0}
\left\{
\begin{aligned}
   -\Delta  \f = \l \f & \quad \mbox{in } D^+(u), \\
   \f_\nu - H\f = 0 & \quad \mbox{on }  F(u), \\
	\f= 0 & \quad \mbox{on } \de D^+(u) \setminus F(u),
  \end{aligned}\right.
  \end{equation}
where the positive phase $D^+(u)\subset \R^n$ is bounded and connected. Since we weren't able to find in the literature explicit treatments of this type of eigenvalue problem, we build its spectral theory here, based on Sobolev trace inequalities that are specific for the one-phase FBP context (see Proposition \ref{prop:trace}). We furthermore prove that a global solution $u$ has finite index $k=\ind(u)$ if and only if there are exactly $k$ linearly independent $L^2(\{u>0\})$ eigenfunctions of \eqref{eq:eig0}, associated to negative eigenvalues, such that $Q$ is positive definite on the $L^2$-orthogonal complement of the space generated by them (Proposition \ref{prop:L2}).

Results relating the index of a minimal surface to its topology are abundant in the minimal surface literature (see \cite{FCminimalFM, Gul1986, Ros2006, ChodMaxI, ChodMaxII} and references therein). The same question has also been explored in the context of finite-ended solutions to the Allen-Cahn equation in the plane (\cite{KowLiuPacard2012, PKP2013, GuiLiuWei2016, WangWei2019}) as well as for its free boundary analogue (\cite{wang2015structure, DuGuiWang2022}). For the one-phase FBP itself, there has been a recent surge of interest into energy non-minimizing solutions: \cite{JK16, JK19} (under a topological constraint),  \cite{FR-RO2019, KW23} (under a stability condition) and \cite{engelstein2022graphical} (under a graphical free boundary condition). The goal of the present article is contribute to these latest developments.

The paper is organized as follows. In Section \ref{sec:prelim} we give the necessary preliminaries as well as the spectral theory of the eigenvalue problem \eqref{eq:eig0} on bounded domains. In Section \ref{sec:globalsol} we present the main results characterizing global solutions of \eqref{FBP} of finite index (Propositions \ref{prop:stab} and \ref{prop:L2}). In Section \ref{sec:dimtwo}, we focus on the two-dimensional problem, where we prove the finite curvature bound \eqref{eq:fincurvintro} and demonstrate how it implies the finite topology property (Proposition \ref{prop:fincurvtop}). Lastly, in Section \ref{sec:Morsecomp}, we carry out the computation of the Morse indices of the three global solutions of finite topology,  $P$, $L$ and $H$, and complete the proof of Theorem \ref{thm:class}. In the Appendix, we provide the Sobolev trace inequalities underlying the spectral theory of the problem \eqref{eq:eig0}.

\section{Preliminaries}\label{sec:prelim}

We start with the definition of a classical solution of the one-phase free boundary problem.

\begin{defn}\label{def:class} A classical solution $u$ to the one-phase FBP in a domain $D\subseteq \R^n$ is a non-negative function $u\in C(\overline{D})$, whose free boundary $F(u)$ is a smooth, oriented hypersurface in $D$, with $u>0$ on one side and $u=0$, on the other. Moreover, $u\in C^\infty(\overline{D^+(u)})$ is harmonic in $D^+(u)$ and satisfies the gradient condition $|\n u|=1$ on $F(u)$ in a pointwise sense from the positive side.
\end{defn}
Note that a classical solution $u$ is Lipschitz continuous in $D$ and the mean curvature $H$ of the free boundary $F(u)$ is a smooth function $H\in C^\infty(\overline{F(u)})$.

Denote by $\mathcal{X}_{D^+(u)}:=C^\infty_c(\overline{D^+(u)}\setminus \de D)$ the space of functions on $D^+(u)$ that are smooth up to the free boundary $F(u)$ but are compactly supported away from the fixed boundary $\de D$. Similarly, for a connected component $\O$ of $D^+(u)$, denote the space of smooth functions on it, supported away from the fixed boundary, by $\X_\O:=C^\infty_c(\overline{\O}\setminus \de D)$.  We may treat $\X_\O$ as a subspace of $\mathcal{X}_{D^+(u)}$ by defining each $\phi \in \X_\O$ to be $0$ in $D^+(u)\setminus \overline{\O}$. 

The second variation form of the Alt-Caffarelli functional $I(\cdot, D)$ at the classical solution $u$ of \eqref{FBP} in $D$ is given by the symmetric bilinear form
\[
Q(\phi, \psi; D^+(u)):= \int_{D^+(u)} \n \phi \cdot \n \psi \, dx - \int_{F(u)} H \phi \psi\, d\H^{n-1} \quad \text{for } \phi,\psi \in \mathcal{X}_{D^+(u)}.
\]
We will often restrict the form to act only on functions supported on a specific connected component $\O\subseteq D^+(u)$:
\[
Q(\phi, \psi; \O):= Q(\phi, \psi; D^+(u)) \quad \text{restricted to } \phi,\psi \in \mathcal{X}_{\O}\subseteq \mathcal{X}_{D^+(u)}.
\]

\begin{defn} The (Morse) index $\ind(u,D^+(u))$ of a classical solution $u$ of the one-phase FBP in a domain $D\subseteq \R^n$ is the index of the variation form $Q(\cdot, \cdot; D^+(u))$, i.e.\
\begin{equation*}
\begin{aligned}
\ind(u, D^+(u)) &:= \ind~ Q(\cdot, \cdot; D^+(u))  \\ 
& := \sup\{ \text{dim } V: Q(\cdot,\cdot; D^+(u)) \text{ is negative definite}  \text{ on the subspace } V \subseteq \X_{D^+(u)}\}.
\end{aligned}
\end{equation*}
It will also be convenient to define the index on a particular connected component $\O$ of $D^+(u)$: $$\ind (u, \O):= \ind~ Q(\cdot,\cdot;\O).$$
We will say that $u$ is \emph{stable} in $D$ (resp. \emph{stable} in $\O$) if $$\ind(u, D^+(u)) = 0 \quad \text{(resp. } \ind(u,\O) = 0).$$
\end{defn}

\begin{rem}[Monotonicity of the index with respect to domain inclusion] \label{rem:indmon}
Assume that $u$ is a classical solution of \eqref{FBP} in a bounded domain $D$ and that $D'\subseteq D$ is a subdomain. Let $\O'$ be a connected component $(D')^+(u)$ and $\O$ -- a connected component of $D^+(u)$, such that $\O'\subseteq \O$. Since we may consider $\X_{(D')^+(u)}\subseteq \X_{D^+(u)}$ and $\X_{\O'}\subseteq \X_{\O}$, we immediately deduce the monotonicity property of the index with respect to domain inclusions:
\begin{equation}\label{eq:indmon}
\ind(u,(D')^+(u))\leq \ind(u, D^+(u)) \quad \text{and} \quad \ind(u,\O') \leq \ind(u,\O). 
\end{equation}
\end{rem}

\begin{rem}
Because of the monotonicity property, one can obtain the index $$\ind(u):=\ind(u, [\R^n]^+(u))$$ of a global solution $u:\R^n \to [0,\infty)$ of \eqref{FBP} as a limit of local indices over bounded subdomains that exhaust $\R^n$. In particular, assume that the positive phase $\O:=[\R^n]^+(u)$ is connected and $0\in \O$, and let $\O_R$ be the connected component of $\O\subset B_R$ containing $0$. Then by \eqref{eq:indmon}, $\ind(u, \O_{R})$ is increasing in $R$ and $\ind(u, \O_{R}) \leq \ind(u)$ for all $R>0$, so that $\lim_{R\to\infty} \ind(u, \O_{R}) \leq \ind(u)$. Conversely, if $V\subset \X_{\O}$ is a finite dimensional subspace of $\X_\O$ such that $Q$ is negative definite on $V$, then any (finite) basis of functions for $V$ is contained in $\X_{\O_R}$ for a large enough $R$. Thus, $V\subseteq \X_{\O_R}$ which implies  $\lim_{R\to\infty} \ind(u, \O_{R}) \geq \ind(u)$. We conclude that
\begin{equation}\label{eq:indlim}
\ind(u)=\lim_{R\to\infty} \ind(u, \O_R). 
\end{equation}
The characterization \eqref{eq:indlim} of the global index is useful due to the \emph{finiteness} of the local index $\ind(u, \O_R)$ on each bounded and connected $\O_R$. This can be seen by relating the local index to the finite number of negative eigenvalues of the associated eigenvalue problem, which we will do next. 
\end{rem}

Let $\O$ be a bounded connected component of the positive phase of a classical solution $u$ to \eqref{FBP} in a bounded domain $D$. To the form $Q(\cdot,\cdot;\O)$ one naturally associates the eigenvalue problem
  \begin{equation}\label{eq:eig}
\left\{
\begin{aligned}
   -\Delta  \f = \l \f & \quad \mbox{in } \O, \\
   \f_\nu - H\phi = 0 & \quad \mbox{on }  F(u)\cap \de \O, \\
	\f= 0 & \quad \mbox{on } \de\O \setminus F(u).
  \end{aligned}\right.
  \end{equation}
At this stage it is useful to extend the domain $[\X_\O]^2$ of $Q(\cdot, \cdot; \O)$ to the Sobolev space $[H^1_F(\O)]^2$, where
$$
H^1_F(\O):=\text{the closure of } C^\infty_c(\Omega \cup (F(u)\cap \de\O)) \text{ in } H^1(\O).
$$
We have
\[
Q(\phi, \psi; \O):= \int_{\O} \n \phi \cdot \n \psi \, dx - \int_{F(u)\cap \de \O} H (T\phi) (T\psi)\, d\H^{n-1} \quad \text{for } \phi,\psi \in H^1_F(\O),
\]
where $T: H^1_F(\O) \to L^2(F(u)\cap \de \O)$ denotes the trace operator onto the free boundary portion of $\de \O$. Clearly, the index of $Q(\cdot, \cdot; \O)$ doesn't change when $\X_\O$ is replaced with $H^1_F(\O),$ and the next theorem equates it to the number of negative eigenvalues of \eqref{eq:eig} (counted with multiplicity).

\begin{theorem}\label{thm:std} Let $u$ be a classical solution of \eqref{FBP} in a bounded domain $D\subset \R^n$ and let $\O$ be a connected component of $D^+(u)$. Then there exist a nondecreasing sequence of eigenvalues $\{\l_k\}_{k\in \N}\subset \R$, with \mbox{$\lim_{k\to\infty} \l_k = +\infty$,} and a set of corresponding eigenfunctions $\{\f_k\}_{k\in \N}\subset H^1_F(\O)$ satisfying \eqref{eq:eig} in a weak sense:
\begin{equation}\label{thm:std:weak}
Q(\f_k, \psi;\O) = \l_k\langle \f_k, \psi\rangle_{L^2(\O)} \quad \text{for all } \psi \in H^1_F(\O),
\end{equation}
such that $\{\f_k\}_{k\in \N}$ is an orthonormal basis for $L^2(\O)$. The eigenfunctions $\{\f_k\}_{k\in\N}$ are, in fact, smooth in $\O$ up to the free boundary portion $F(u)\cap \de \O$ and satisfy the Robin boundary condition there classically. Moreover, the Courant-Fischer minimax characterization of the eigenvalues holds:
\begin{align}
\l_1(\O) &= \min\{ Q(\psi,\psi;\O): \psi \in H^1_F(\O) \text{ with } \|\psi\|_{L^2(\O)}=1\}, \label{thm:std:first} \\
\l_k(\O) &= \min_{V_k\in S_k} \max\{ Q(\psi,\psi;\O): \psi \in V_k \text{ with } \|\psi\|_{L^2(\O)}=1\} \quad \text{for } k\geq 2, \label{thm:std:Courant-Fischer}
\end{align}
where $S_k$ is the set of $k$-dimensional subspaces of $H^1_F(\O)$. In particular, $$\ind(u,\O)=\max\{k\in \N: \l_k<0\}.$$ 
\end{theorem}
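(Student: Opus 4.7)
The plan is to follow the standard Hilbert-space treatment of self-adjoint elliptic boundary-value problems, adapted to our mixed Dirichlet-Robin setting.

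First, I would fix the functional-analytic framework. The space $H^1_F(\O)$ is a closed subspace of $H^1(\O)$, and by Rellich-Kondrachov the embedding $H^1_F(\O) \hookrightarrow L^2(\O)$ is compact. The form $Q(\cdot,\cdot;\O)$ extends continuously to $H^1_F(\O)$, since $H \in C^\infty(\overline{F(u)})$ is bounded on $F(u) \cap \de\O$ and the trace $T: H^1_F(\O) \to L^2(F(u)\cap \de \O)$ is continuous. The crucial analytic input is the Sobolev trace inequality from Proposition~\ref{prop:trace}: for every $\e>0$ there exists $C_\e>0$ such that
\begin{equation*}
\int_{F(u)\cap \de \O} (T\psi)^2 \, d\H^{n-1} \leq \e \int_\O |\n \psi|^2 \, dx + C_\e \int_\O \psi^2 \, dx, \qquad \psi \in H^1_F(\O).
\end{equation*}
Choosing $\e = 1/(2\|H\|_\infty)$ yields a constant $C>0$ such that the shifted form $\widetilde{Q}(\phi,\psi):=Q(\phi,\psi;\O)+C\langle \phi,\psi\rangle_{L^2(\O)}$ is symmetric, continuous and coercive on $H^1_F(\O)$.

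Next, I would invert $\widetilde Q$. By the Lax-Milgram lemma, for each $f \in L^2(\O)$ there is a unique $T f \in H^1_F(\O)$ with $\widetilde Q(Tf,\psi)=\langle f,\psi\rangle_{L^2(\O)}$ for all $\psi \in H^1_F(\O)$. The resulting solution operator $T: L^2(\O) \to L^2(\O)$ is self-adjoint (by symmetry of $\widetilde Q$), positive, bounded, and, composing with the compact embedding, \emph{compact}. The spectral theorem for compact self-adjoint operators produces an $L^2(\O)$-orthonormal eigenbasis $\{\f_k\}_{k\in \N}$ of $T$ with positive eigenvalues $\mu_k \to 0^+$. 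Setting $\l_k:=\mu_k^{-1}-C$ gives the desired nondecreasing sequence with $\l_k\to +\infty$ and \eqref{thm:std:weak}. The Courant-Fischer formulas \eqref{thm:std:first}-\eqref{thm:std:Courant-Fischer} then follow by the textbook argument, using that the Rayleigh quotient $Q(\psi,\psi;\O)/\|\psi\|^2_{L^2(\O)}$ restricted to $\mathrm{span}\{\f_1,\dots,\f_k\}$ is bounded above by $\l_k$, while on any $k$-dimensional subspace some element must have nontrivial projection onto $\mathrm{span}\{\f_k,\f_{k+1},\dots\}$.

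For the regularity claim, I would argue locally. Interior smoothness of $\f_k$ is the standard elliptic bootstrap for $-\D \f_k = \l_k \f_k$. Near an arbitrary point $p\in F(u)\cap \de\O$, the free boundary is a smooth hypersurface and the Robin coefficient $H$ is smooth, so one flattens $F(u)$ and applies the Agmon-Douglis-Nirenberg theory (or a direct tangential-difference-quotient argument) to bootstrap: starting from $\f_k \in H^1$, one obtains $\f_k \in H^2_{\mathrm{loc}}$ near $F(u)\cap \de\O$, then $H^m_{\mathrm{loc}}$ for every $m$, hence $C^\infty$ up to $F(u)\cap \de\O$, and the Robin condition holds pointwise.

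Finally, the identity $\ind(u,\O)=\#\{k:\l_k<0\}$ is a linear-algebra consequence of the spectral decomposition. Set $N:=\#\{k:\l_k<0\}$. On $V:=\mathrm{span}\{\f_1,\dots,\f_N\}$ one has $Q(\psi,\psi;\O)=\sum_{k=1}^N \l_k c_k^2 < 0$ for every nonzero $\psi=\sum c_k \f_k$, so $\ind(u,\O)\geq N$. Conversely, if $W\subseteq H^1_F(\O)$ has dimension $N+1$ and $Q$ is negative definite on $W$, then $W$ would intersect $V^{\perp_{L^2}}$ nontrivially at some $\psi$, and expanding $\psi$ in the eigenbasis gives $Q(\psi,\psi;\O) = \sum_{k\geq N+1} \l_k c_k^2 \geq 0$, a contradiction; thus $\ind(u,\O)\leq N$. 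Since $\X_\O$ is dense in $H^1_F(\O)$, the index computed over $\X_\O$ coincides with that over $H^1_F(\O)$. The main subtle point in this outline is the trace inequality underpinning coercivity, since $\O$ need not have a globally Lipschitz boundary at the junction of $F(u)$ with $\de D$; this is precisely what Proposition~\ref{prop:trace} is designed to supply.
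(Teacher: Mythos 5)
Your proposal is correct and follows essentially the same route as the paper's proof: shift the form by a large constant using the trace inequality of Proposition~\ref{prop:trace} to achieve coercivity, apply Lax--Milgram, compose with the compact embedding $H^1_F(\O) \hookrightarrow L^2(\O)$ to obtain a compact self-adjoint positive solution operator, invoke the spectral theorem, and then deduce the Courant--Fischer characterization, regularity via oblique-derivative elliptic theory, and the index count. The only cosmetic difference is that you argue $\ind(u,\O) \leq N$ directly by an eigenfunction expansion rather than citing \eqref{thm:std:Courant-Fischer}, but this is the same computation that underlies Courant--Fischer.
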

\begin{proof}
Denote $F:=F(u)\cap \de \O$.
The proof uses standard arguments from the theory of elliptic mixed boundary value problems (for example, see \cite[Chapter 1]{DamascelliPacellaBook}). We will briefly mention some of the key details for the reader's benefit. Define for $\L>0$, the bilinear symmetric form $Q_\L: H^1_F(\O)\times H^1_F(\O) \to \R$ 
\[
Q_\L(\f,\psi) : =Q(\f, \psi;\O) + \L \langle \f, \psi\rangle_{L^2(\O)}.
\]
Since $H\in L^\infty(F(u)\cap \de\O)$ and $u$ is Lipschitz continuous, the $L^2$-trace inequality \eqref{prop:trace:ineq1} yields that $Q_\L$ is continuous in $[H^1_F(\O)]^2$. Furthermore, for $\L\geq \L_0$ large enough, $Q_\L$ is coercive in $H^1_F(\O)$. Indeed, by \eqref{prop:trace:ineq2} we have 
\[
\|T\f\|_{L^2(F)}^2 \leq \e \|\n \f\|^2_{L^2(\O)} + C/\e \|\f\|^2_{L^2(\O)}
\]
for all $\e>0$ and some constant $C$ depending on $n$, and the Lipschitz constant of $u$. As a result,
\begin{align*}
Q_\L(\f,\f) &\geq \|\n \f\|^2_{L^2(\O)} + \L \|\f\|^2_{L^2(\O)} - \|H\|_{L^\infty(F)} \|T \f\|_{L^2(F)}^2 \\
& \geq (1- \e\|H\|_{L^\infty(F)} )  \|\n \f\|^2_{L^2(\O)} + (\L  - \|H\|_{L^\infty(F)} C/\e) \|\f\|^2_{L^2(\O)} \geq \frac{1}{2} \|\f\|^2_{H^1(\O)},
\end{align*}
choosing $\e=\e(C, \|H\|_{L^\infty(F)})>0$ small enough and $\L\geq \L_0(C,\|H\|_{L^\infty(F)})$ large enough. By the Lax-Milgram theorem, the positive definite bilinear form $Q_\L$ then induces a continuous linear operator $\tilde{S}: L^2(\O) \to H^1_F(\O)$, where $\tilde{S}f$ is the unique element of $H^1_F(\O)$, satisfying
\begin{equation}
Q_\L(\tilde{S}f,g) = \langle f, g\rangle_{L^2(\O)} \quad \text{for all } g \in H^1_F(\O),
\end{equation}
for a given $f\in L^2(\O)$. Denoting by $\iota:  H^1_F(\O) \to L^2(\O)$ the compact embedding of $H^1_F(\O)$ into $L^2(\O)$, then $S:=\iota\circ \tilde{S}$ is a compact linear operator on $L^2(\O)$ which is self-adjoint:
\[
\langle f, Sg\rangle_{L^2(\O)} = Q_\L(\tilde{S}f, \tilde{S}g) = Q_\L(\tilde{S}g, \tilde{S}f) = \langle g, Sf\rangle_{L^2(\O)},
\]
and positive since $Q_\L$ is positive definite. By the Spectral Theorem for positive compact self-adjoint operators, we obtain an orthonormal basis for $L^2(\O)$ of eigenfunctions $\{\f_k\}_k$ of $S$ and a corresponding non-increasing sequence $\{\mu_k\}_k\subset (0,\infty)$ of positive eigenvalues: $$S \f_k = \mu_k \f_k,$$
such that $\lim_{k\to\infty} \mu_k = 0$. We now see that $\f_k$ are weak solutions of the eigenvalue problem
\[
\left\{
\begin{aligned}
   -\Delta  \f_k + \L \f_k = \mu_k^{-1} \f_k & \quad \mbox{in } \O, \\
   (\f_k)_\nu - H\phi_k = 0 & \quad \mbox{on }  F, \\
	\f_k= 0 & \quad \mbox{on } \de\O \setminus F,
  \end{aligned}\right.
\]
so that $\l_k:=(\mu_k^{-1} - \L) \nearrow +\infty$ are precisely the eigenvalues of \eqref{eq:eig}. Since $F(u)$ is smooth and $H$ is a smooth function on $F(u)$, the eigenfunctions $\{\f_k\}_k$ are smooth up to $F(u)\cap \de \O$ by local interior and boundary elliptic estimates for the oblique derivative problem (\cite{Liebermanbook}). 

The variational characterization of the eigenvalues in \eqref{thm:std:first} and \eqref{thm:std:Courant-Fischer} now follows by the usual arguments (see \cite[Chapter 22]{Zeidlerbook}). Finally, if $l$ denotes the number of negative eigenvalues of \eqref{eq:eig}, then clearly $\ind(u,\O)\geq l$, as $Q$ is negative definite on $V = \text{Span}(\{\f_k\}_{k=1}^l)$. On the other hand, it must be that $\ind(u,\O)\leq l$, for \eqref{thm:std:Courant-Fischer} implies that in any subspace $V_{l+1}\subset H^1_F(\O)$ with $\dim (V_{l+1}) = l+1$, there is an $L^2$-normalized vector $\psi \in V_{l+1}$, such that $Q(\psi, \psi;\O)\geq \l_{l+1}\geq 0$.
\end{proof}

\begin{prop}\label{prop:firsteig} Under the hypotheses and notation of Theorem \ref{thm:std}, the following properties hold:
\begin{enumerate}[(i)]
\item If $Q(\phi, \phi;\O) = \l_1\|\phi\|^2_{L^2(\O)}$ for some nonzero $\phi\in H^1_F(\O)$, then $\phi$ is an eigenfunction corresponding to $\l_1$.
\item Any eigenfunction, associated to $\l_1$, is strictly positive or strictly negative in $\O$, and the first eigenvalue is simple. 
\item If $D'$ is a bounded subdomain of $D$ and $\O'$ is a connected component of $(D')^+(u)$ such that $\O'\subseteq \O$, then $\l_k(\O')\geq \l_k(\O)$, $k\in \N$. Moreover, if $\O\setminus \overline{\O'}\neq \emptyset$, then the inequality for the first eigenvalue $\l_1$ is strict: $\l_1(\O')>\l_1(\O)$.
\end{enumerate}
\end{prop}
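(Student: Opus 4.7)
The plan is to establish the three statements in order, with each building on the previous. For (i), I would argue via the first variation of the Rayleigh quotient $\mathcal{R}(\phi):= Q(\phi,\phi;\O)/\|\phi\|^2_{L^2(\O)}$. Given a minimizer $\phi$ and any test function $\psi\in H^1_F(\O)$, the map $t\mapsto \mathcal{R}(\phi + t\psi)$ is differentiable at $t=0$ with vanishing derivative; expanding this condition and using the continuity of $Q$ on $[H^1_F(\O)]^2$ (which was verified in the proof of Theorem \ref{thm:std}) yields the weak eigenvalue equation $Q(\phi, \psi;\O) = \l_1 \langle \phi, \psi\rangle_{L^2(\O)}$ for all $\psi\in H^1_F(\O)$, so $\phi$ is a weak $\l_1$-eigenfunction.

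For (ii), the key observation is that $Q(|\phi|, |\phi|;\O) = Q(\phi,\phi;\O)$ and $\||\phi|\|_{L^2(\O)} = \|\phi\|_{L^2(\O)}$, using $|\n|\phi||=|\n\phi|$ a.e.\ and the fact that the free boundary term depends only on $\phi^2$. Moreover $|\phi|\in H^1_F(\O)$, since the Dirichlet trace of $|\phi|$ on $\de\O\setminus F(u)$ vanishes whenever that of $\phi$ does. Thus if $\phi$ is an $\l_1$-eigenfunction, so is $|\phi|$, and by the regularity part of Theorem \ref{thm:std} it is smooth in $\O$ and classically satisfies $-\Delta |\phi| = \l_1 |\phi|$ there. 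The strong maximum principle for linear elliptic equations with bounded zeroth-order coefficient (with no sign restriction on $\l_1$) applied in the connected set $\O$ then gives $|\phi|>0$ throughout $\O$, so $\phi$ itself has a definite sign. Simplicity of $\l_1$ follows by a standard argument: two linearly independent $\l_1$-eigenfunctions $\phi_1,\phi_2$ would each have a definite sign, and then $\phi_1 - (\phi_1(x_0)/\phi_2(x_0))\phi_2$ is a nontrivial $\l_1$-eigenfunction vanishing at $x_0\in\O$, contradicting the sign dichotomy.

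For (iii), I would exploit the minimax formulas \eqref{thm:std:first} and \eqref{thm:std:Courant-Fischer} combined with a zero-extension argument. Because $\O$ is a connected component of the open set $D^+(u)$, it is disjoint from $F(u)$, so $\de \O'\cap \O \subseteq \de D'\setminus F(u)$; hence any $\phi\in H^1_F(\O')$ extended by zero to $\O$ lies in $H^1_F(\O)$. By the same observation, $F(u)\cap \de\O'\subseteq \de\O$, so the free-boundary integrals in $Q(\cdot,\cdot;\O)$ and $Q(\cdot,\cdot;\O')$ coincide, and one obtains the identity $Q(\phi,\phi;\O) = Q(\phi,\phi;\O')$ and $\|\phi\|_{L^2(\O)} = \|\phi\|_{L^2(\O')}$ for the extended function. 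Restricting the minimax over $H^1_F(\O)$ to the embedded image of $H^1_F(\O')$ immediately gives $\l_k(\O)\leq \l_k(\O')$. For the strict inequality at $k=1$ when $\O\setminus\overline{\O'}\neq\emptyset$, I would argue by contradiction: equality would make the zero-extension of a first eigenfunction of $\O'$ into a minimizer of the Rayleigh quotient on $\O$, hence by (i) a first eigenfunction there, which by (ii) would have a definite sign in $\O$---contradicting its vanishing on the nonempty open set $\O\setminus\overline{\O'}$.

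The most delicate step is the topological bookkeeping in (iii): verifying that the portion of $\de\O'$ sitting inside $\O$ is entirely a Dirichlet portion (so zero-extension preserves $H^1_F$-membership) and that $F(u)\cap \de\O'$ is captured by $F(u)\cap \de\O$ (so the boundary integral is preserved). Both facts reduce to the elementary observation that $F(u)$ is disjoint from $\O\subset D^+(u)$, but care is needed to keep track of which portions of $\de\O'$ lie where relative to $\de D'$ and $F(u)$.
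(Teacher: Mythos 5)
Your proof is correct and follows essentially the same route as the paper's. The only cosmetic differences are in (i), where the paper expands the inequality $Q(\phi+t\psi,\phi+t\psi;\O)\geq\l_1\|\phi+t\psi\|^2_{L^2(\O)}$ directly rather than differentiating the Rayleigh quotient, and in (ii), where the paper works with $\phi^\pm$ (showing both are first eigenfunctions, hence both strictly positive, an immediate contradiction) instead of $|\phi|$; part (iii), including the zero-extension bookkeeping and the appeal to (i) and (ii) for the strict inequality, is essentially identical.
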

\begin{proof}
\noindent\emph{(i).} Without loss of generality, we may assume that $\|\phi\|_{L^2(\O)}=1$. For any $\psi\in H^1_F(\O)$, we have by \eqref{thm:std:first} that $Q(\phi+t\psi, \phi+t\psi)\geq \l_1\|\phi + t\psi\|_{L^2(\O)}^2$ for $t\in \R$.  Expanding both sides of this equality, we get
\[
Q(\phi,\phi) + 2t Q(\phi, \psi) + t^2 Q(\psi,\psi)\geq \l_1 (1 + 2t \langle \phi,\psi\rangle_{L^2(\O)} + t^2 \|\psi\|^2_{L^2(\O)}).
\]
Subtracting $Q(\phi,\phi) = \l_1$ from both sides, dividing by $t>0$ and taking $t\downarrow 0$, we obtain that $Q(\phi,\psi)\geq  \l_1 \langle \phi,\psi\rangle_{L^2(\O)}$. Doing the same for $t<0$ and taking $t\uparrow 0$, we get the opposite inequality. Thus, we have $Q(\phi,\psi)=  \l_1 \langle \phi,\psi\rangle_{L^2(\O)}$ for any $\psi \in H^1_F(\O)$, meaning that $\phi$ is an eigenfunction associated with $\l_1$. \\

\noindent\emph{(ii).} First note that if an eigenfunction $\f\geq 0$ (resp.\ $f\leq 0$) in $\O$, then by the strong maximum principle it must be that $\f>0$ (resp. $\f<0$) in $\O$. Let $\f_1$ be an eigenfunction associated with $\l_1$, and assume that it changes sign. Then both its positive and negative part $\f_1^{\pm}\in H^1_F(\O)$ are nonzero. Now observe that
\[
Q(\f_1^\pm, \f_1^\pm) = Q(\f_1, \f_1^\pm) = \l_1\langle \f_1,\f_1^+\rangle_{L^2(\O)} = \l_1\|\f_1^\pm\|^2_{L^2(\O)},
\]
so that by \emph{(i)}, $\f_1^\pm$ is an eigenfunction, corresponding to $\l_1$. Hence, both $\f_1^\pm>0$ in all of $\O$, which contradicts the assumption that $\f_1$ changes sign. 

If there are two eigenfunctions $\phi$ and $\psi$ associated to $\l_1$, we can choose them to be $L^2(\O)$-orthogonal. However, that would contradict the strict sign of $\phi$ and $\psi$. \\

\noindent\emph{(iii).} Any function $\psi'\in H^1_F(\O')$ can be extended trivially to a function $\psi\in H^1_F(\O)$, so that $Q(\psi',\psi'; \O') = Q(\psi,\psi,\O)$. By the minimax characterization \eqref{thm:std:first}-\eqref{thm:std:Courant-Fischer}, we then get the monotonicity of the eigenvalue $\l_k(\O')\geq \l_k(\O)$, $k\in \N$. 

Assume that $\O\setminus \overline{\O'} \neq \emptyset$ and let $\psi'\in H^1_F(\O')$ be an eigenfunction of \eqref{eq:eig}, associated with $\l_1(\O')$. Then its extension to $\O$, $\psi \in H^1_F(\O)$, is zero on the nonempty open set $\O\setminus \overline{\O'}$. But if $\l_1(\O')=\l_1(\O)$, then
\[
Q(\psi, \psi, \O) = Q(\psi',\psi';\O') = \l_1(\O')\|\psi'\|^2_{L^2(\O')} = \l_1(\O)\|\psi\|^2_{L^2(\O)},
\]
whence (\emph{i}) implies that $\psi$ is a first eigenfunction in $\O'$. However, (\emph{ii}) then says that $\psi$ is of a strict sign in $\O$, so we reach a contradiction. We conclude that $\l_1(\O')>\l_1(\O)$.
\end{proof}

\section{Global solutions of finite index}\label{sec:globalsol}

In this section, we study global classical solutions $u:\R^n \to [0,\infty)$ of \eqref{FBP} of finite index and connected positive phase $\O:=[\R^n]^+(u)$.  We will show that $u$ must be stable outside a compact set $K$ (Proposition \ref{prop:stab}), so that $\O\setminus K$ admits a positive harmonic function $h$ that satisfies the Robin condition $h_\nu - H h =0$ on $F(u)\setminus K$. We will also prove that $\ind(u)$ is equal to the dimension of a space of globally defined $L^2(\O)$ eigenfunctions of \eqref{eq:eig}. 

We start with some important basic facts about global solutions of the one-phase FBP.

\begin{prop}\label{prop:facts} Let $u$ be a global classical solution of \eqref{FBP}, whose positive phase $\O$ is connected. Then 
\begin{equation}\label{prop:facts:gradbd}
|\n u| \leq 1 \quad \text{in } \O.
\end{equation}
Furthermore, $|\n u|<1$ in $\O$ unless $u$ is a one-plane solution. As a result,
\begin{equation}\label{prop:facts:meancurv}
H(p) = \de_\nu |\n u|^2/2 \geq 0 \quad \text{for all } p\in F(u),
\end{equation}
and $H>0$ on $F(u)$ unless $u$ is a one-plane solution, in which case $H\equiv 0$.
\end{prop}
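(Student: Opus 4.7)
The strategy is to apply the maximum principle to $w := |\n u|^2$, which satisfies $\Delta w = 2|D^2 u|^2 \geq 0$ in $\O$ by the harmonicity of $u$, and equals $1$ on $F(u) = \de \O$ by the free boundary condition. On any bounded exhaustion $\O_R := \O \cap B_R$ the maximum principle is immediate, so the real issue is controlling $w$ on the artificial boundary $\O \cap \de B_R$.

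To rule out $L := \sup_\O w > 1$, I would first dispose of the trivial case $F(u) = \emptyset$, in which $u$ is an entire nonnegative harmonic function and hence constant, so $w \equiv 0$. Otherwise $w \not\equiv 1$, so the strong maximum principle forbids $L$ from being attained at an interior point, and one is forced to look at a sequence $x_k \in \O$ with $|x_k| \to \infty$ along which $|\n u(x_k)| \to \sqrt{L}$. Set $d_k := \mathrm{dist}(x_k, F(u))$ and rescale $u_k(y) := u(x_k + d_k y)/d_k$, each of which is a classical solution of \eqref{FBP} on a rescaled domain. After extracting subsequences, the plan is to produce a blow-down limit $u_\infty$, using Harnack-type control on $u$ in the interior ball $B_{d_k}(x_k) \subset \O$ together with the smooth-up-to-the-boundary regularity along $F(u)$. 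The limit $u_\infty$ has $|\n u_\infty|$ attaining $\sqrt{L}$ at an interior point of its positive phase: if $d_k$ stays bounded this interior point sits near the origin, and if $d_k \to \infty$ the positive phase expands to fill $\R^n$, forcing $u_\infty$ to be a constant nonnegative entire harmonic function, contradicting $|\n u_\infty| = \sqrt{L} > 0$. In the remaining case, the strong maximum principle applied to $|\n u_\infty|^2$ forces $u_\infty$ to be affine; the free boundary condition then says its gradient has magnitude $1 < \sqrt{L}$, a contradiction. This gives $L \leq 1$.

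The remaining assertions follow directly. If $w(x_0) = 1$ at an interior $x_0 \in \O$, the strong maximum principle yields $w \equiv 1$, so $|D^2 u| \equiv 0$, $u$ is affine, and the non-negativity together with $|\n u| = 1$ identifies $u$ as a one-plane solution. For the curvature identity, the relation $\n u = -\nu$ on $F(u)$ (since $|\n u| = 1$ and $\n u$ points into $\O$) gives $\de_\nu |\n u|^2 = 2 (\n u)^\top D^2 u\, \nu = -2 u_{\nu\nu}$; combining $\Delta u = 0$ with the fact that $u$ and all its tangential derivatives vanish on $F(u)$ shows that $u_{\nu\nu} = -H$ in the sign convention of the proposition, recovering $H = \de_\nu|\n u|^2/2$. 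Since $w$ attains its maximum on $F(u)$, the Hopf boundary lemma gives $\de_\nu w \geq 0$, i.e.\ $H \geq 0$, and its strict version upgrades this to $H > 0$ whenever $w$ is nonconstant, that is, unless $u$ is a one-plane solution. I expect the blow-down/compactness step in the second paragraph to be the main technical obstacle, since extracting a meaningful limit of $u_k$ requires careful uniform estimates (normalizing by $u_k(0)$ or using the Harnack inequality to trade interior values for gradient values) that are not immediate from Definition \ref{def:class} alone.
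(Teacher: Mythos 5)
Your handling of the subharmonicity of $|\nabla u|^2$, the strong maximum principle dichotomy, the curvature identity $H = \partial_\nu |\nabla u|^2/2 = -u_{\nu\nu}$, and the Hopf-lemma conclusion $H>0$ (in the non-planar case) is correct and essentially identical to the paper's proof of the second half of the proposition. The divergence is entirely in how the gradient bound $|\nabla u| \leq 1$ is obtained: the paper does not prove it at all, but simply cites \cite[Proposition A.5]{KW23}, where it is established for global classical solutions.

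Your proposed blow-down proof of the gradient bound has a genuine gap, which you yourself flag at the end. To extract a subsequential $C^1_{\mathrm{loc}}$ limit from $u_k(y) := u(x_k + d_k y)/d_k$ along which $|\nabla u_k(0)| = |\nabla u(x_k)| \to \sqrt{L}$ actually passes, you need uniform Lipschitz (or better, $C^{1,\alpha}$ up to the free boundary) control on the rescaled family. Definition \ref{def:class} gives qualitative smoothness of each $u_k$ up to $F(u_k)$ but no quantitative bound, and the interior ingredients you invoke do not close the loop: Harnack and interior gradient estimates give $|\nabla u(x_k)| \leq C_n\, u(x_k)/d_k$, but bounding $u(x_k)/d_k$ by a constant is equivalent to the Lipschitz bound you are trying to prove. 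Without such control, you cannot rule out $u(x_k)/d_k \to \infty$; normalizing by $u_k(0)$ instead then kills the gradient information at $0$, and the case split on $d_k$ never terminates in a contradiction. Note also that the supremum $L = \sup_\O |\nabla u|^2$ is not known a priori to be finite, and your argument tacitly assumes it is. These are not cosmetic issues; they are precisely why the universal Lipschitz bound for the one-phase problem (Caffarelli--Jerison--Kenig for minimizers, extended to classical solutions in \cite{KW23}) is a theorem in its own right and is cited rather than re-derived. For the present proposition you should simply invoke that result, as the paper does, and keep the part of your argument from the maximum principle onward.
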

\begin{proof}
The gradient bound \eqref{prop:facts:gradbd} for global solutions was established in \cite[Proposition A.5]{KW23}. Since $u$ is harmonic in $\O$, we have
\begin{equation}\label{prop:facts:subh}
\D |\n u|^2 = 2 |D^2 u|^2 + 2 \n u \cdot \n (\D u) = 2|D^2 u|^2 \quad \text{in } \O,
\end{equation}
so that $|\n u|^2$ is subharmonic in $\O$. By the strict maximum principle and \eqref{prop:facts:gradbd}, either $|\n u|^2 \equiv 1$ or $|\n u|^2 <1$. In the first case, $\eqref{prop:facts:subh}$ entails that $|D^2 u|^2 \equiv 0$, meaning that $u$ is an affine function in $\O$, whence we conclude that $u$ is a one-plane solution in appropriate Euclidean coordinates, with $H\equiv 0$. In the case of the strict inequality $|\n u|^2 <1$, we have on account of the Hopf Lemma:
\[
H = \text{div}\frac{\n u}{|\n u|}=  \frac{1}{2} \frac{(-\n u) \cdot \n |\n u|^2}{|\n u|^3} = \frac{1}{2}\de_\nu |\n u|^2 > 0 \quad \text{on } F(u),
\]
since $-\n u = \nu$ and $|\n u|=1$ on $F(u)$.
\end{proof}

For the constructions in this section it will be useful to know that the positive phase minus a large enough ball, $\O\setminus \overline{B_\rho}$, has finitely many connected components. In order to prove this, we need an auxiliary technical result that says that for any $\rho_0>0$, we can always find $\rho>\rho_0$, such that either $\de B_\rho$ doesn't intersect $F(u)$, or $\de B_\rho$ intersects $F(u)$ transversally.

\begin{lemma}\label{lem:Sard}
Let $u$ be a classical solution to the one-phase FBP in $\R^n$ with a connected positive phase $\O$ and let $\rho_0>0$. Then there exists $\rho_1>\rho_0$ such that either $\de B_{\rho_1}\cap F(u) = \emptyset$, or $\de B_{\rho_1}$ intersects $F(u)$ transversally. In the second case, one can find $\rho_2>\rho_1$ such that $\de B_{\rho_2}$ intersects $F(u)$ transversally, as well. 
\end{lemma}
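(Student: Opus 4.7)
The plan is to apply Sard's theorem to the smooth distance function restricted to the free boundary. Since we are looking at radii $\rho>\rho_0>0$, the function $r:F(u)\cap\{|x|>0\}\to\R$ defined by $r(p):=|p|$ is smooth (and we only care about its values above $\rho_0$, so the behavior near the origin is irrelevant). The key geometric observation is that a value $\rho$ is a regular value of $r|_{F(u)}$ at a point $p$ if and only if $\partial B_\rho$ intersects $F(u)$ transversally at $p$. Indeed, $\rho$ is critical at $p$ iff the tangential projection of $\nabla r(p)=p/|p|$ onto $T_pF(u)$ vanishes, i.e.\ iff $p/|p|$ is normal to $F(u)$ at $p$. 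Since $\pm p/|p|$ is the unit normal to $\partial B_\rho$ at $p$, this is exactly the condition $T_p\partial B_\rho = T_pF(u)$, i.e.\ the failure of transversality (both being $(n-1)$-dimensional hyperplanes in $\R^n$).

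With this equivalence in hand, Sard's theorem provides the proof. Since $r|_{F(u)}$ is a smooth map between smooth manifolds of dimensions $n-1$ and $1$, its set of critical values has Lebesgue measure zero in $\R$. In particular, the interval $(\rho_0,\infty)$ contains a regular value $\rho_1>\rho_0$. Either $\rho_1\notin r(F(u))$, in which case $\partial B_{\rho_1}\cap F(u)=\emptyset$ (first case), or else every point $p\in\partial B_{\rho_1}\cap F(u)$ is a regular preimage, which by the equivalence above means the intersection is transversal (second case).

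For the final assertion, suppose $\rho_1$ realizes the second case, and pick any $p\in F(u)$ with $|p|=\rho_1$. Because $\rho_1$ is a regular value, $r|_{F(u)}$ is a submersion at $p$, so by the implicit function theorem its image contains an open interval $(\rho_1-\e,\rho_1+\e)$ for some $\e>0$. Another application of Sard supplies a regular value $\rho_2\in(\rho_1,\rho_1+\e)$; since $\rho_2\in r(F(u))$, the intersection $\partial B_{\rho_2}\cap F(u)$ is nonempty, and it is transverse by the equivalence above.

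No serious obstacle is anticipated: the only point requiring care is the geometric equivalence between regular values of $r|_{F(u)}$ and transversal intersections with spheres, which is a one-line tangent-space calculation. The connectedness of $\O$ and the other structural properties of the FBP play no role here --- the lemma is really just a measure-theoretic genericity statement about smooth hypersurfaces intersected with spheres.
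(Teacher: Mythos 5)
Your proof is correct, and it streamlines the paper's argument in one useful respect. Both arguments rest on the same two pillars: the observation that the failure of transversality at $p \in \de B_\rho \cap F(u)$ is exactly the statement that $\rho$ (equivalently $\rho^2$) is a critical value of the radial function restricted to $F(u)$, and an application of Sard's theorem to that function. Where you differ is in how you ensure that the regular value you pick actually lies in the image of $r|_{F(u)}$ (so that the intersection is nonempty and hence genuinely transversal, not vacuously so). The paper does this by a case analysis on the image $f(\gamma)$ of each connected component $\gamma$: the degenerate case $f(\gamma)=\{a^2\}$ is disposed of by invoking Cauchy--Kovalevskaya and unique continuation to identify $u$ with the radial (disk-complement) solution, after which the non-degenerate components are shown to force a nontrivial open interval inside $f(F(u))$. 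You bypass that entirely: once $\rho_1$ is regular and lies in $r(F(u))$, the submersion property at any preimage point makes $r|_{F(u)}$ locally surjective, so its image contains an open interval around $\rho_1$, and a second application of Sard inside that interval produces $\rho_2$. This is shorter and avoids invoking any structural information about global solutions of the FBP; as you note, the connectedness of $\O$ plays no role, which is consistent with the fact that the paper's use of it is confined to the radial identification step you skip.
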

\begin{proof}
Consider the smooth function $f: F(u)\to [0,\infty)$ on the smooth submanifold $F(u)$ given by $f(p)=|p|^2$. We observe that the intersection of $\de B_\rho$ and $F(u)$ is empty or transversal if and only if $\rho^2$ is not a critical value of $f$. 

Let $\g$ be a connected component of the free boundary $F(u)$. Then either $\g$ is compact, or $\g$ is an unbounded closed set. Thus, there are three possibilities for $f(\g)$:
\begin{enumerate}
\item $f(\g)= \{a^2\}$ for some $a\geq 0$;
\item $f(\g)= [a^2,b^2]$ for some $0\leq a<b<\infty$;
\item $f(\g) = [a^2, \infty)$ for some $a\geq 0$.
\end{enumerate}

We claim that in the first case, it must be that $a>0$ and $u(x) = [G(x)]^+$, where 
\[
G(x) = \begin{cases} a \log|x/a| & \text{when } n=2, \\  a (1-|x/a|^{2-n}) & \text{when } n=3. \end{cases}
\]
Indeed, and for any $p\in f^{-1}(a^2)$, a neighbourhood $F(u)\cap B_r(p)$ of $p$ in $F(u)$ must be a piece of the sphere $\de B_a$, $a>0$. But since $u=0$ and $|\n u|=1$ on $F(u)\cap B_r(p) = \de B_a \cap B_r(p)$, the Cauchy-Kovalevskaya theorem and the unique continuation principle, applied to the harmonic $u:\O\to [0,\infty)$, yield that 
\[
u(x) = |G(x)| \quad \text{for all } x\in \O,
\]
As $\O$ is unbounded and connected, it must be that $\O= (B_a)^c$ and $u = G^+$. In particular, this means that $\de B_{\rho_1}\cap F(u)=\emptyset$ for all $\rho_1>a$.

Hence, we may now assume that for any connected component $\g$ of $F(u)$, $f(\g)$ is a nontrivial closed interval or a closed semi-infinite interval.  If $F(u)\setminus \overline{B_{\rho_0}}=\emptyset$, then $\de B_{\rho_1}$ doesn't intersect $F(u)$ for any $\rho_1>\rho_0$ and we are done. If not, there exists a component $\g$ that has a nontrivial intersection with $(\overline{B_{\rho_0}})^c$. In particular, $f(F(u))$ is guaranteed to contain a nontrivial open interval $I\subset (\rho_0^2, \infty)$. By Sard's theorem a.e. $\rho^2 \in I$ is not a critical value of $f$. Therefore, there exists $\rho_2>\rho_1 > \rho_0$ such that both $\de B_{\rho_1}$ and $\de B_{\rho_2}$ intersect $F(u)$ transversally. 
\end{proof}

\begin{prop}\label{prop:finend} Let $u$ be a global classical solution of \eqref{FBP} in $\R^n$, whose positive phase $\O$ is connected. Assume that $\de B_\rho$ intersects $F(u)$ transversally. Then $\O\setminus \overline{B_\rho}$ has finitely many connected components. 
\end{prop}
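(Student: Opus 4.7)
Set $A:=\Omega\cap \partial B_\rho$ and $\Sigma:=F(u)\cap \partial B_\rho$. The strategy is (i) to show that $A$ has only finitely many connected components in $\partial B_\rho$, and (ii) to set up an injection from the set of connected components of $\Omega\setminus \overline{B_\rho}$ into the finite family of nonempty, pairwise disjoint unions of components of $A$, via the map $C\mapsto \overline{C}\cap A$. The conclusion is then immediate.

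For (i), I would first dispatch the degenerate case $\Sigma=\emptyset$: by connectedness of $\partial B_\rho$, either $\partial B_\rho\subset \Omega$ (so $A=\partial B_\rho$ has one component) or $\partial B_\rho\cap \Omega=\emptyset$ (in which case the connected set $\Omega$ lies entirely on one side of $\partial B_\rho$, making $\Omega\setminus \overline{B_\rho}$ either empty or equal to $\Omega$, and we are done). Otherwise, transversality makes $\Sigma$ a nonempty compact smooth $(n-2)$-submanifold of the compact $(n-1)$-sphere $\partial B_\rho$, hence having finitely many connected components. A standard tubular-neighborhood argument for $\Sigma\subset \partial B_\rho$, combined with the compactness of $\partial B_\rho\setminus T$, then yields finitely many connected components for $\partial B_\rho\setminus \Sigma$, and thus for its open subset $A$.

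For (ii), I would fix a component $C$ of $\Omega\setminus \overline{B_\rho}$ and first verify $\overline{C}\cap \partial B_\rho\neq \emptyset$: otherwise $\overline{C}\subset \{|x|>\rho\}$, which forces $C$ to be clopen in $\Omega$ (it is open as a component of an open subset of $\Omega$, and any limit point $x\in\overline{C}\cap \Omega$ automatically lies in $\Omega\setminus \overline{B_\rho}$, where $C$ is closed), hence $C=\Omega$ by connectedness of $\Omega$, contradicting $\Omega\cap \overline{B_\rho}\neq \emptyset$. Now pick $p\in \overline{C}\cap \partial B_\rho$. If $p\in \Omega$, then $p\in \overline{C}\cap A$ directly. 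If $p\in F(u)$, transversality supplies local coordinates near $p$ in which $\Omega=\{x_1>0\}$, $\partial B_\rho=\{x_2=0\}$, and $\{|x|>\rho\}=\{x_2>0\}$; the quadrant $\{x_1>0,x_2>0\}\cap U$ is connected and contained in $\Omega\setminus \overline{B_\rho}$, so since $p\in \overline{C}$ means some point of $C$ enters $U$, the whole quadrant must lie in $C$; consequently the local piece $\{x_1>0,x_2=0\}\cap U$ of $A$ lies in $\overline{C}$. Either way, $\overline{C}\cap A\neq \emptyset$, and the same local-ball analysis at any $q\in \overline{C}\cap A$ shows that $\overline{C}\cap A$ is open in $A$. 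Being also closed in $A$, it is a nonempty union of connected components of $A$. Finally, if $q\in (\overline{C}\cap A)\cap (\overline{C'}\cap A)$ for distinct components $C\neq C'$, the local half-ball around $q$ would have to be contained in both, which is absurd; so the assignment $C\mapsto \overline{C}\cap A$ is injective.

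\textbf{Main obstacle.} The delicate step is promoting $\overline{C}\cap \partial B_\rho\neq \emptyset$ to $\overline{C}\cap A\neq \emptyset$, where transversality enters essentially via the local product structure near $\Sigma$. All remaining steps are a routine mix of point-set topology and the classical fact that a compact codimension-one submanifold of a compact manifold separates it into finitely many pieces.
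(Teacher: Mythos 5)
Your proposal is correct, and it takes a genuinely different route from the paper. The paper thickens the sphere to an annulus: using Lemma~\ref{lem:Sard} it finds a second radius $\rho_1>\rho$ with $\de B_{\rho_1}$ also transversal to $F(u)$, reduces the problem to finiteness of the set $\mathcal{E}$ of components of $\O\cap(B_{\rho_1}\setminus\overline{B_\rho})$, and proves that by a compactness-contradiction argument (if $\mathcal{E}$ were infinite, pick $p_k\in\de E_k\cap F(u)$, extract a convergent subsequence, and use the local graph structure of $F(u)$ together with transversality to show that a point of $F(u)$ can lie in the closure of at most one member of $\mathcal{E}$). You instead work directly on the spherical cross-section $A=\O\cap\de B_\rho$: transversality makes $\Sigma=F(u)\cap\de B_\rho$ a compact codimension-one submanifold of $\de B_\rho$, so $\de B_\rho\setminus\Sigma$ (and hence $A$) has finitely many components, and you then build an injection $C\mapsto\overline{C}\cap A$ from components of $\O\setminus\overline{B_\rho}$ into nonempty unions of components of $A$. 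Both uses of transversality are at heart the same --- the local product (quadrant) structure near a point of $\Sigma$ --- but your reduction to the sphere avoids the need for a second transversal radius (and thus Lemma~\ref{lem:Sard}), at the price of invoking the separation fact for compact hypersurfaces of a compact manifold. Two small points worth making explicit in a write-up: you should note that $A$ is a union of full components of $\de B_\rho\setminus\Sigma$ (which holds because each component of $\de B_\rho\setminus F(u)$ lies either in $\O$ or in the interior of $\{u=0\}$), and that transversality does not give a single chart linearizing both hypersurfaces simultaneously, but it does give the local decomposition of a small ball at $p$ into four quadrant-like open regions, which is all your argument uses.
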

\begin{proof}
By Lemma \ref{lem:Sard} there exists $\rho_1>\rho$ such that $\de B_{\rho_1}$ again intersects the smooth hypersurface $F(u)$ transversally. As $\O$ is connected, and $0\in \O$, every connected component of $\O\setminus \overline{B_\rho}$ has a non-empty intersection with $\O\cap (B_{\rho_1}\setminus \overline{B_\rho}),$ so we will be done once we show that $\mathcal{E}:=\{E: E \text{ is a connected component of }\O\cap (B_{\rho_1}\setminus \overline{B_\rho})\}$ is finite.

First, claim that a point $p\in F(u)$ can belong to the boundary of at most one member of $\mathcal{E}$. Fix $p \in F(u)\cap \overline{B_{\rho_1}\setminus B_\rho}$, and let $U_r(p):=[B_r(p)]^+(u)$. For $r>0$ small enough, $U_r(p)$ is the supergraph of a smooth function in the direction of $\n u(p)$ in $B_r(p)$ over the tangent hyperplane. It suffices to show that $U_r(p)\cap (B_{\rho_1}\setminus \overline{B_\rho})$ is non-empty and connected for some small enough $r>0$. There are two possibilities for the position of $p$. First, if $p\in F(u)\setminus (\de B_\rho\cup \de B_{\rho_1})$, then $U_r(p)\cap (B_{\rho_1}\setminus \overline{B_\rho}) = U_r(p)$ is certainly connected for all small enough $r$. The other possibility is that $p\in \de F(u) \cap \de B_\rho$ or $p\in \de F(u) \cap \de B_{\rho_1}$. If $p\in \de F(u) \cap \de B_\rho$, then the transversal intersection of $\de B_\rho$ with $F(u)$ means that $\de B_\rho$ separates $U_r(p)$, for small enough $r>0$, into two connected components $U_r^\pm$: 
\[
U_r(p)\setminus \de B_\rho = U_r^+ \cup U_r^-, \quad \text{where} \quad U_r^+\subset (\overline{B_\rho})^c \text{ and }   U_r^-\subset B_\rho. 
\]
Thus, $U_r(p)\cap (B_{\rho_1}\setminus \overline{B_\rho}) = U_r^+$ is again connected. The case, where $p\in \de F(u) \cap \de B_{\rho_1}$, is treated analogously. 

Now if $\mathcal{E}$ contains infinitely many distinct members $\{E_k\}_{k=1}^\infty$, we can choose  points $p_k\in \de E_k\cap F(u)$, $k\in \N$. Since the $\{p_k\}$ belong to the compact $F(u)\cap \overline{B_{\rho_1}\setminus B_\rho}$, up to taking a subsequence, they converge to a point $p_\infty \in F(u)\cap \overline{B_{\rho_1}\setminus B_\rho}$. Using the argument  above, we see that $U_r(p_\infty)\cap (B_{\rho_1}\setminus \overline{B_\rho})$ is always non-empty and connected for a small enough $r>0$, so that $p_\infty\in \de E_\infty$ for some $E_\infty\in \mathcal{E}$. Furthermore, for this value of $r>0$, the free boundary subset $F_r:=[F(u)\cap \overline{B_{\rho_1}\setminus B_\rho}]\cap B_r(p_\infty)$ belongs to $\de E_\infty$. However, for all large $k$, we obviously have $p_k\in F_r$, so that by our claim above, $p_k$ can only belong to $\de E_\infty$ and to the boundary of no other member of $\mathcal{E}$. This contradicts the fact that the $p_k$ were chosen from the boundaries of distinct members $E_k$ of $\mathcal{E}$.
\end{proof}

We are now ready to establish the key result that a finite index classical solution of $u$ in $\R^n$ must be stable outside a compact set. 

\begin{prop}\label{prop:stab} Let $u:\R^n\to [0,\infty)$ be a global solution of \eqref{FBP} with a connected positive phase  $\O=\{u>0\}$ and let $0\in \O$. If $u$ has finite Morse index, then $u$ is stable in $\R^n\setminus \overline{B_\rho}$, for some large enough $\rho>0$. Moreover, there exists a positive function $h\in C^\infty(\overline{\O}\setminus \overline{B_\rho})$ solving
\begin{equation}\label{prop:stab:eq}
\left\{
\begin{aligned}
   -\Delta  h = 0 & \quad \mbox{in } \O\setminus \overline{B_\rho}, \\
   h_\nu - H h = 0 & \quad \mbox{on }  F(u)\setminus  \overline{B_\rho}. \\
  \end{aligned}\right.
\end{equation}
\end{prop}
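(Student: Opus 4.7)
The proposition divides into two assertions: stability of $u$ outside a compact set, and the construction of the positive Jacobi function $h$. To establish the former, write $k := \ind(u) < \infty$. By \eqref{eq:indlim}, the nondecreasing integer sequence $\ind(u,\O_R)$ stabilizes at $k$ for all $R$ greater than some $\rho_0 > 0$. Using Lemma \ref{lem:Sard}, fix $\rho > \rho_0$ with $\partial B_\rho$ transversal to $F(u)$; Proposition \ref{prop:finend} then implies that $\O\setminus\overline{B_\rho}$ has only finitely many connected components $B_1,\ldots,B_N$. Suppose, for contradiction, that $u$ is not stable in $\R^n\setminus\overline{B_\rho}$: then there is $\psi\in \X_{(\R^n\setminus\overline{B_\rho})^+(u)}$ with $Q(\psi,\psi) < 0$. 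Since $\ind(u,\O_\rho) = k$, pick a $k$-dimensional subspace $V\subset \X_{\O_\rho}$ on which $Q$ is negative definite. Every $\phi\in V$ has support uniformly bounded away from $\partial B_\rho$ inside $\overline{B_\rho}$, while $\mathrm{supp}(\psi)$ sits in $\R^n\setminus\overline{B_{\rho+\delta}}$ for some $\delta>0$; thus their zero-extensions to $\O$ satisfy $Q(\phi,\psi;\O) = 0$. Hence $V\oplus \mathrm{span}(\psi)$ is a $(k+1)$-dimensional subspace of $\X_\O$ on which $Q$ is negative definite, contradicting $\ind(u) = k$.

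For the existence of $h$, I work on each component $B_i$ separately. Pick a sequence $R_j\uparrow \infty$ of radii (transversal to $F(u)$, by Lemma \ref{lem:Sard}) and set $E_j := B_i\cap B_{R_j}$. By the stability just established and the strict monotonicity of Proposition \ref{prop:firsteig}(iii), $\lambda_1(E_j) > 0$; hence the form $Q(\cdot,\cdot;E_j)$ is coercive on $H^1_F(E_j)$, and Lax-Milgram yields a unique weak solution $h_j$ to
\[
-\Delta h_j = 0 \text{ in } E_j, \quad h_j = 1 \text{ on } \partial E_j\setminus F(u), \quad (h_j)_\nu - Hh_j = 0 \text{ on } F(u)\cap \partial E_j,
\]
smooth up to $F(u)\cap \partial E_j$ by the regularity theory for oblique-derivative problems (\cite{Liebermanbook}). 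Positivity of $h_j$ follows from the variational identity $Q(h_j, h_j^-; E_j) = -Q(h_j^-, h_j^-; E_j)$, where $h_j^- := \max(-h_j,0)\in H^1_F(E_j)$ (since $h_j\equiv 1$ on the Dirichlet portion of $\partial E_j$): the weak equation forces the left-hand side to vanish, while $\lambda_1(E_j)>0$ forces $h_j^-\equiv 0$. The strong maximum principle and Hopf lemma (applied on $F(u)$ through the Robin condition) then upgrade $h_j\geq 0$ to $h_j > 0$.

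To complete the argument, fix $p_0\in B_i\cap B_{R_1}$ and rescale to $\tilde h_j := h_j/h_j(p_0)$. Interior Harnack and boundary Harnack inequalities---valid for positive harmonic functions satisfying a smooth Robin condition on $F(u)$---render $\{\tilde h_j\}$ uniformly bounded above and below on each compact subset of $\overline{B_i}\setminus\overline{B_\rho}$ containing $p_0$. Schauder estimates up to the smooth portion of $\partial E_j$ yield $C^\infty_{\mathrm{loc}}$ precompactness, so a diagonal subsequence converges to $h_i\in C^\infty(\overline{B_i}\setminus \overline{B_\rho})$ with $h_i(p_0) = 1$, satisfying \eqref{prop:stab:eq} in $B_i$; the strong maximum principle upgrades $h_i\geq 0$ to $h_i > 0$. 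Patching the $h_i$ across the finitely many components $B_1,\ldots,B_N$ yields the desired global $h$. The principal technical point is the uniform upper bound needed to pass to the limit, which rests on the boundary Harnack inequality for the Robin problem on the smooth free boundary $F(u)$.
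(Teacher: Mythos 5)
Your proof is correct and follows essentially the same two-part structure as the paper's. In the stability argument you directly pick a $k$-dimensional negative-definite subspace $V\subset\X_{\O_\rho}$ (which exists by definition of $\ind(u,\O_\rho)=k$), rather than using the $k$ eigenfunctions from Theorem~\ref{thm:std} as the paper does; both give the same $(k+1)$-dimensional contradiction via support disjointness. In the construction of $h$, the paper introduces the auxiliary function $w=(|\n u|^2+1)/2$, which is cleverly chosen so that $\D w=|D^2u|^2$ and $w_\nu - Hw=0$ on $F(u)$ hold identically; this converts the task to solving a \emph{homogeneous} Robin problem with source $|D^2u|^2$ via the Fredholm alternative, and then $h_R=v_R+w$ is harmonic. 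You instead prescribe the constant Dirichlet datum $1$ and absorb the resulting inhomogeneous Robin term $\int_{F(u)\cap\partial E_j} H\phi\,d\H^{n-1}$ into the linear functional after the lift $h_j=g_j+1$; this is a standard Lax--Milgram argument and is equally valid (both approaches rest on the strict positivity $\l_1>0$, which is exactly why $0$ is not an eigenvalue). The positivity argument, normalization at an interior point, Harnack bounds, and boundary Schauder/elliptic estimates to extract a smooth limit are the same as in the paper.

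Two small things you elided that the paper handles explicitly. First, Lemma~\ref{lem:Sard} only guarantees that $\partial B_\rho\cap F(u)$ is either empty \emph{or} transversal; if it is empty for all large $\rho$, then $F(u)$ is compact and $\O\setminus\overline{B_\rho}=\R^n\setminus\overline{B_\rho}$ is a single component — trivial, but the empty case should be mentioned before invoking Proposition~\ref{prop:finend}, which assumes transversality. Second, your $E_j:=B_i\cap B_{R_j}$ may be disconnected; the paper instead works on the connected component of $U\cap B_{R}$ bordering $\partial B_\rho$. One can check (using that $\O$ is connected and unbounded) that every component of $E_j$ must touch the fixed boundary, so the BVP remains well-posed and the component containing $p_0$ exhausts $B_i$ as $j\to\infty$ — but restricting to the correct component from the start, as the paper does, is cleaner and avoids this extra argument. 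Also, $\l_1(E_j)>0$ requires appealing to Proposition~\ref{prop:firsteig}(iii) with respect to a strictly smaller slice, which is implicit in your phrase ``strict monotonicity'' but would deserve one more sentence.
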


\begin{proof}
Denote by $\O_r$ the connected component of $B_r^+(u)$ containing the origin and note that $\O_{r_1}\subseteq \O_{r_2}$ whenever $r_1<r_2$. Let $k:=\ind(u)<\infty$. By \eqref{eq:indlim} there exists $\rho$ sufficiently large that $k=\ind(u, \O_\rho)$. According to Lemma \ref{lem:Sard}, we can choose $\rho$ in a way that the intersection of $\de B_\rho$ with $F(u)$ is either transversal or empty.

We claim that $u$ is stable in $D:=\R^n \setminus \overline{B_\rho}$. If not, there exists a nontrivial function $\phi\in \X_{D^+(u)}$ such that $Q(\phi, \phi; D^+(u)) < 0.$ 
By the connectedness of $\O$, we can find a second radius $R>\rho$ such that $\text{supp }\phi \cap \O \subseteq \O_R,$
meaning that $$Q(\phi, \phi;\O_R)=Q(\phi, \phi; D^+(u)) <0.$$
Now, let $\{\f_i\}_{i=1}^k \subseteq H^1_F(\O_{\rho})$ be $k$  be orthonormal eigenfunctions of \eqref{eq:eig} in $\O_\rho$, associated with all the negative eigenvalues $\{\l_i\}_{i=1}^k$.  Extend them trivially to $H^1_F(\O_R)$-functions. Then
\[
Q(\f_i,\f_j; \O_R) = \l_i \d_{ij} \quad \text{for all } i,j =1,2,\ldots k,
\]
and since $\f_i$  and $\phi$ have disjoint supports in $\O_R$, we also have 
$$
Q(\f_i, \phi;\O_R)=0 \quad \text{for all } i=1,\ldots, k.
$$
Hence, if $W=\text{Span}(\{\phi\} \cup \{\f_i\}_{i=1}^k)$, then $\dim(W)=k+1$ and we see that for any nontrivial $\psi= c_0\phi + \sum_{i=1}^{k} c_i \f_i \in W\setminus \{0\}$,
\[
Q(\psi,\psi;\O_R) = c_0^2 Q(\phi, \phi;\O_R) + \sum_{i=1}^k c_i^2 Q(\f_i,\f_i;\O_R) < 0
\]
meaning that $\ind(u,\O_R)\geq k+1$, which contradicts the fact that $\ind(u,\O_R)\leq \ind(u)= k$.

Let us now construct a positive function $h\in C^\infty(\overline{\O}\setminus \overline{B_\rho})$ satisfying \eqref{prop:stab:eq}. If $F(u)\cap \de B_\rho =\emptyset$, then $\O\setminus \overline{B_\rho}$ is connected. If not, $\de B_\rho$ intersects $F(u)$ transversally, so that $\O\setminus \overline{B_\rho}$ again consists of finitely many connected components  $\{\O_j\}_{j=1}^m$, according to Proposition \ref{prop:finend}. We will be done once we construct $h$ in each $\O_j.$ 

Fix $j\in \{1,\ldots, m\}$ and denote $U:=\O_j$.  Let $R>\rho$ and denote by $U_R$ the connected component of $U\cap B_R$ that borders $\de B_\rho$. Since $u$ is stable in $\R^n\setminus \overline{B_\rho}$, then $Q(\cdot, \cdot; U_R)$ is nonnegative definite, so that the first eigenvalue $\l_1(U_R) \geq 0$ for all $R>\rho$  by \eqref{thm:std:first} of Theorem \ref{thm:std}. Furthermore, as $U_R \setminus \overline{U_{R'}}\neq \emptyset$ for $\rho<R'<R$, Proposition \ref{prop:firsteig}(\textit{iii}) yields that 
\begin{equation*}
\l_1(U_R)>\l_1(U_{R'})\geq 0 \quad \text{for all } R>\rho.
\end{equation*}
Because of the strict positivity of the first eigenvalue $\l_1(U_{R})>0$, the Fredholm alternative tells us that the problem 
\[
\left\{
\begin{aligned}
   -\Delta  v = |D^2 u|^2 & \quad \mbox{in } U_{R}, \\
   v_\nu - H v = 0 & \quad \mbox{on }  F(u)\cap \de U_{R}, \\	
  v = 0 & \quad \mbox{on } \de U_{R} \setminus F(u),
  \end{aligned}\right.
\]
with a right-hand side  $|D^2 u|^2\in L^2(U_R)$, 
has a unique weak solution $v_{R}\in H^1_F(U_{R})$. Noting that $w:= (|\n u|^2 + 1)/2\geq 1/2$ satisfies
\begin{align*}
\D w = |D^2 u|^2  \text{ in } \O \quad \text{and} \quad w_\nu  = H = H w \text{ on } \de \O,
\end{align*}
on account of \eqref{prop:facts:meancurv}, we get that $h_{R}:= v_{R} + w \in H^1(U_{R})$ is a weak solution to
\begin{equation*}
\left\{
\begin{aligned}
   -\Delta  h = 0 & \quad \mbox{in } U_{R}, \\
   h_\nu - H v = 0 & \quad \mbox{on }  F(u)\cap \de U_{R}, \\	
  h = w & \quad \mbox{on } \de U_{R} \setminus F(u)
  \end{aligned}\right.
\end{equation*}
in the sense that $h_{R}-w\in H^1_F(U_{R})$ and 
\begin{equation}\label{eq:weakIR}
Q(h_{R}, \psi; U_{R}) = 0 \quad \text{for any }\psi \in H^1_F(U_{R}).
\end{equation}
Claim that the harmonic function $h_{R}>0$ in $U_{R}$. By the strong maximum principle, it suffices to show that $h_{R}\geq 0$ in $U_{R}$. If $h_{R}$ changed sign, the fact that $h_{R}=w>0$ on $\de U_{R} \setminus F(u)$ would imply that its nonzero negative part $h^-_{R}\in H^1_F(U_{R})$. But then \eqref{eq:weakIR} entails that
$$
Q(h^-_{R},h^-_{R}; U_{R}) = Q(h_{R},h^-_{R}; U_{R}) = 0,
$$
which would contradict the strict stability estimate
\[
Q(h^-_{R},h^-_{R}; U_{R})  \geq \l_1(U_{R}) \|h^-_{R}\|_{L^2(U_{R})}^2 > 0. 
\]

Now, fix a point $p$ that is contained in $U_{R}$ for all $R>\rho$ and define $\bar{h}_{R}(x):=\frac{h_{R}(x)}{h_{R}(p)}$ in $U_{R}$. Note that $\bar{h}_{R}\in H^1(U_{R})$ solves
\[
\begin{aligned}
   -\Delta  h = 0 & \quad \mbox{in } U_{R}, \\
   h_\nu - H v = 0 & \quad \mbox{on }  F(u)\cap \de U_{R}, \\	
  h > 0 & \quad \text{in } U_{R} \quad \text{with } h(p)=1.
  \end{aligned}
\]
Let $A$ and $A_1$, with $A\Subset A_1$, be two annuli, containing $p$, that are centered at the origin and compactly contained in $B_R\setminus \overline{B_\rho}$ for all large $R>0$. By the Harnack inequality (see \cite[Theorem 5.44]{Liebermanbook}), we know that $\sup_{A_1\cap U_{R}}\bar{h}_{R}(x) \leq C_{A_1}$ is bounded independently of $R$, so that by local boundary elliptic estimates
\[
\|\bar{h}_{R}\|_{C^l(A\cap U_{R})} \leq C_{l,A,A'} \quad \text{for all large } R,
\]
since $F(u)$ is smooth and  $H\in C^\infty(F(u))$. Hence, we can find a sequence $R_i\uparrow \infty$ and a harmonic function $h\in C^\infty(\overline{U}\setminus \de B_\rho)$ such that for any $l\in\N$,
$$
\bar{h}_{R_i} \to h \quad \text{uniformly in } C^{l}(A \cap U),
$$
for any annulus $A\Subset \R^n \cap \overline{B_\rho}$, centered at the origin. We conclude that $h$  satisfies $(h_i)_\nu - H h_i = 0$ classically on $\de U \setminus \de B_\rho$  and that $h(p)=1$. The latter implies that $h$ is positive in $U$ due to the connectedness of $U$ and the strong maximum principle.

\end{proof}

In the next proposition we characterize the finite index of a global solution $u$ with a positive phase $\O$ as the maximal number of linearly independent $L^2(\O)$-eigenfunctions associated to negative eigenvalues.

\begin{prop}\label{prop:L2} Let $u:\R^n\to [0,\infty)$ be a global solution of \eqref{FBP} with a connected positive phase $\O=\{u>0\}$. The following are equivalent:
\begin{enumerate}[(i)]
\item $\ind(u)<\infty$;
\item There exists a finite dimensional subspace $V\subseteq L^2(\O)$, generated by an orthonormal set $\{\f_i\}_{i=1}^k$ of eigenfunctions of the problem
\[
\begin{aligned}
   -\Delta  \f_i = \l_i \f_i & \quad \mbox{in } \O, \\
   (\f_i)_\nu - H \f_i = 0 & \quad \mbox{on }  F(u),
  \end{aligned}
\]
associated to negative eigenvalues $\{\l_i\}_{i=1}^k$, such that $Q(\phi, \phi;\O)\geq 0$ for all $\phi \in C^\infty_c(\overline{\O})$ that satisfy $\phi\lfloor_\O \in V^\perp$.
\end{enumerate}
Moreover, if $\ind(u)<\infty$, then $\ind(u)=\mathrm{dim}(V)$.
\end{prop}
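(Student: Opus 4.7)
My plan is to bridge the global Morse form with the local spectral theory of Theorem \ref{thm:std} on the exhausting subdomains $\O_R := $ (the connected component of $B_R^+(u)$ containing $0$), with Proposition \ref{prop:stab} serving as the essential ingredient that keeps negative eigenvalues away from the ``essential spectrum at infinity''. Throughout, let $\lambda_j(\O_R)$ and $\varphi_j^R \in H^1_F(\O_R)$ denote the eigenvalues (counted with multiplicity) and a corresponding $L^2$-orthonormal basis of eigenfunctions from that theorem.

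For the direction (i)$\Rightarrow$(ii), I first note that by Proposition \ref{prop:firsteig}(iii) each $\lambda_j(\O_R)$ is non-increasing in $R$, and an IMS-type localization with a partition of unity $\chi^2+\eta^2=1$ (where $\chi$ is supported outside $\overline{B_\rho}$, with $\rho$ the stability radius from Proposition \ref{prop:stab}, and $\eta$ in $B_{2\rho}$) gives a uniform lower bound $Q(\cdot,\cdot;\O_R)\geq -C\|\cdot\|_{L^2(\O_R)}^2$. Thus $\mu_j:=\lim_R \lambda_j(\O_R)\in\R$ exists for every $j$. Since $\ind(u)=k$, comparing the min-max characterization on $\O_R$ with a $k$-dimensional subspace of $\X_\O$ on which $Q$ is negative definite forces $\mu_k<0$, and likewise the non-existence of a $(k+1)$-dimensional such subspace forces $\mu_{k+1}\geq 0$. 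For $j\leq k$ the integration-by-parts identity
\[
Q(\chi\varphi_j^R,\chi\varphi_j^R;\O_R) = \int_{\O_R}|\nabla\chi|^2(\varphi_j^R)^2\, dx + \lambda_j(\O_R)\int_{\O_R}\chi^2(\varphi_j^R)^2\, dx,
\]
valid for any cutoff $\chi$ supported outside $\overline{B_\rho}$, combined with stability (the left-hand side is nonnegative), yields the uniform tail bound $\int_{\O_R\setminus B_{2r}}(\varphi_j^R)^2 \leq C/(|\mu_j|r^2)$ for $r>\rho$. Local Caccioppoli and elliptic estimates up to the smooth free boundary (\cite{Liebermanbook}) bound $\varphi_j^R$ in $H^1_{loc}(\overline{\O})$ and then in $C^\infty_{loc}(\overline{\O})$, so a diagonal extraction produces limits $\varphi_j\in C^\infty(\overline{\O})\cap L^2(\O)$ with $\|\varphi_j\|_{L^2(\O)}=1$, solving the global eigenvalue problem with $\mu_j$; $L^2$-orthonormality passes to the limit thanks to strong $L^2_{loc}$-convergence together with the uniform tail estimate. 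Setting $V:=\mathrm{span}\{\varphi_j\}_{j=1}^k$, to verify $Q(\phi,\phi;\O)\geq 0$ for $\phi\in C^\infty_c(\overline{\O})\cap V^\perp$, I project $\phi$ in $L^2(\O_R)$ onto $V_R:=\mathrm{span}\{\varphi_j^R\}_{j=1}^k$ via $\tilde\phi:=\phi-\sum_{j=1}^k \langle\phi,\varphi_j^R\rangle\varphi_j^R$; since $\langle\phi,\varphi_j^R\rangle\to\langle\phi,\varphi_j\rangle_{L^2(\O)}=0$ by $L^2_{loc}$-convergence, the identity
\[
Q(\tilde\phi,\tilde\phi;\O_R) = Q(\phi,\phi;\O_R) - \sum_{j=1}^k \lambda_j(\O_R)\langle\phi,\varphi_j^R\rangle^2,
\]
together with $Q(\tilde\phi,\varphi_j^R;\O_R)=0$, shows that if $Q(\phi,\phi;\O)<0$ then $V_R\oplus\mathrm{span}(\tilde\phi)$ is $(k+1)$-dimensional with $Q$ negative definite for $R$ large, contradicting $\ind(u,\O_R)\leq k$.

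For (ii)$\Rightarrow$(i) together with the index equality $\ind(u)=\mathrm{dim}(V)$, the upper bound $\ind(u)\leq k$ is immediate: any $(k+1)$-dimensional subspace of $\X_\O$ intersects $V^\perp$ non-trivially, where $Q\geq 0$ by hypothesis. For the matching lower bound, I approximate each $\varphi_j$ by $\eta_R\varphi_j\in\X_\O$ with $\eta_R$ a standard cutoff equal to $1$ on $B_R$ and supported in $B_{2R}$. The cutoff identity applied to the \emph{global} eigenfunctions $\varphi_j$ (with eigenvalues $\lambda_j<0$) gives automatic $L^2$-tail decay for $\varphi_j$ by the same argument as above, and a short calculation shows that the $k\times k$ Gram and $Q$-matrices $(\langle\eta_R\varphi_i,\eta_R\varphi_j\rangle_{L^2(\O)})$ and $(Q(\eta_R\varphi_i,\eta_R\varphi_j;\O))$ converge as $R\to\infty$ to $I_k$ and $\mathrm{diag}(\lambda_1,\ldots,\lambda_k)$ respectively; hence $Q$ is negative definite on the $k$-dimensional span for $R$ sufficiently large, giving $\ind(u)\geq k$.

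The main technical obstacle is the uniform $L^2$-tail decay of the finite-domain eigenfunctions $\varphi_j^R$ and the accompanying passage to smooth $L^2$-limits on the unbounded $\O$. The cutoff identity combined with Proposition \ref{prop:stab} is designed precisely to trade the uncontrolled boundary term $\int_F H(\varphi_j^R)^2$ for a localized gradient integral of $\chi$ on a compact annulus, which is what makes the extraction of limits feasible. A secondary bookkeeping issue is ensuring the off-diagonal cross terms in the matrices $(Q(\eta_R\varphi_i,\eta_R\varphi_j;\O))$ vanish in the limit for (ii)$\Rightarrow$(i); again this reduces to tail decay and the cutoff identity.
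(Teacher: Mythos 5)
Your proposal tracks the paper's proof essentially step for step: exhaust $\O$ by the bounded domains $\O_R$; use the eigenvalue monotonicity from Proposition~\ref{prop:firsteig}(iii) together with the stability outside a compact set from Proposition~\ref{prop:stab} to get, via the IMS-type cutoff identity $Q(\chi\varphi_j^R,\chi\varphi_j^R)=\int|\nabla\chi|^2(\varphi_j^R)^2+\lambda_j(\O_R)\int\chi^2(\varphi_j^R)^2$, a uniform $L^2$-tail bound on the finite-domain eigenfunctions; combine this with local $H^1$/elliptic bounds to extract $L^2(\O)$-orthonormal limit eigenfunctions with negative eigenvalues; and then verify the sign condition on $V^\perp$ by projecting a test function off $\mathrm{span}\{\varphi_j^R\}$. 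The converse is the same short orthogonality argument as in the paper.

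Where you differ is only in routing details. For the final step of (i)$\Rightarrow$(ii), the paper estimates directly $Q(\phi,\phi;\O)=Q(\phi_R,\phi_R;\O_R)+\sum_j a_{j,R}^2\lambda_{j,R}\geq\sum_j a_{j,R}^2\lambda_{j,R}\to 0$ using $Q(\phi_R,\phi_R;\O_R)\geq 0$, whereas you run a contradiction and package the negative space as $V_R\oplus\mathrm{span}(\tilde\phi)$; both work. For the local $H^1$ bound you invoke Caccioppoli plus boundary elliptic estimates, while the paper derives the estimate by hand through the cutoff $\eta=1-(1-\bar\eta)^2$ and the trace inequality of Proposition~\ref{prop:trace}; in either route the Robin boundary term has to be absorbed via the same trace inequality, so the substance is the same. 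You also add a separate argument in (ii)$\Rightarrow$(i) that $\ind(u)\geq\dim V$ by cutting off the \emph{global} eigenfunctions $\eta_R\varphi_j$ and tracking the Gram and $Q$-matrices (which does converge to $I_k$ and $\mathrm{diag}(\lambda_1,\dots,\lambda_k)$ via the symmetric version of the cutoff identity and $L^2$-tail decay); the paper gets the equality $\ind(u)=\dim V$ for free because its $V$ in (i)$\Rightarrow$(ii) is built from exactly $k=\ind(u)$ eigenfunctions, while your addition shows the stronger statement that any $V$ satisfying (ii) has $\dim V=\ind(u)$. One small imprecision: the tail bound constant should come from the uniform bound $-\lambda_j(\O_R)\geq -\lambda_j(\O_{\rho_0})=:c_0>0$ rather than from $|\mu_j|$ directly, since $-\lambda_j(\O_R)$ increases to $|\mu_j|$; this does not affect the argument.
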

\begin{proof}
(ii) $\Rightarrow$ (i). Assume to the contrary that $\ind(u)=\infty$. This means that for some large enough $R>0$, we would have $\ind(u;\O_R)>k=\text{dim}(V)$. Then there exists an $(k+1)$-dimensional subspace $\tilde{W}\subseteq  H^1_F(\O_R)$ such that $Q(\phi, \phi; \O_R)<0$ for all $\phi\in \tilde{W}\setminus\{0\}$. Treating the quadratic form $[-Q](\cdot, \cdot;\O_R)$ as an inner product on $\tilde{W}$, we can find a $[-Q]$-orthonormal basis $\{ \tilde{\phi}_i\}_{i=1}^{k+1}$ for $\tilde{W}$:
\[
-Q(\tilde{\phi}_i, \tilde{\phi}_j; \O_R) = \d_{ij} \quad \text{for } i,j\in \{1,\ldots, k+1\}.
\]
By approximating each $\tilde{\phi}_i\in H^1_F(\O_R)$ with a function $\phi_i\in \X_{\O_R}$, we can find, for any $\e>0$, a set $S:=\{\phi_i\}_{i=1}^{k+1}\subset  \X_{\O_R}$ of $(k+1)$ functions that are almost $[-Q(\cdot,\cdot;\O_R)]$ orthonormal:
\begin{equation}\label{prop:L2:almostorto}
|-Q(\phi_i, \phi_j; \O_R) - \d_{ij}| \leq \e \quad \text{for } i,j\in \{1,\ldots, k+1\}.
\end{equation}
Hence, $S$ is a linearly independent set and $Q(\cdot, \cdot, \O_R)$ is negative definite on $W:=\text{Span}(S)\subset \X_{\O_R}$, for small enough $\e>0$.

Now, as $\text{dim}(W)=k+1>k$, there exists a nontrivial element $\phi\in W \subset C^\infty_c(\overline{\O})$, such that
\[
\int_\O \phi \f_i \, dx = 0 \quad \text{for } i = 1, \ldots, k. 
\]
Nevertheless, $Q(\phi,\phi;\O)=Q(\phi,\phi;\O_R)<0$, which yields a contradiction. 

\bigskip
\noindent (i) $\Rightarrow$ (ii). Let $\rho_0>0$ be sufficiently large such that 
\begin{itemize}
\item $\ind(u,\O_R)=\ind(u)=:k$ for all $R\geq \rho_0$, and 
\item $u$ is stable in $\R^n\setminus \overline{B_\rho}$, according to Proposition \ref{thm:std}. 
\end{itemize}
Let $R\geq \rho_0$ and denote by $\{\l_{i,R}:=\l_i(\O_R)\}_{i=1}^k$ the negative eigenvalues of \eqref{eq:eig} in $\O_R$ in increasing order, counted with multiplicity, and by $\{\f_{i,R}\}_{i=1}^k$ their corresponding $L^2(\O_R)$-normalized eigenfunctions. By Theorem \ref{thm:std}, we know that $\{\f_{i,R}\}_{i=1}^k \subset C^\infty(\overline{\O_R}\setminus \de B_R)$ and form an orthonormal set in $L^2(\O_R)$. We also have from Proposition \ref{prop:firsteig}(\emph{iii}) that $\l_{k,R}$ is a decreasing function of $R$, which implies that
\begin{equation}\label{prop:L2:lambda}
\l_{i,R} \leq \l_{k,R} \leq \l_{k,\rho_0}=: -c_0 < 0 \quad \text{for } i=1,\ldots, k, 
\end{equation}
where $c_0$ depends on $u$ only. 

\emph{Claim that for all $\rho_0\leq \rho < 2\rho \leq R$, we have} 
\begin{equation}\label{prop:L2:est1}
\int_{\O_R\setminus B_{2\rho}} \f_{i,R}^2 \, dx \leq C \rho^{-2},
\end{equation}
for some constant $C$ depending on $u$ only.
\begin{proof}
Take a standard smooth cut-off function $\bar{\eta}: \R^n \to [0,1]$ such that $\bar{\eta} = 0$ in $B_\rho$, $\bar{\eta} = 1$ in $\R^n\setminus B_{2\rho}$ and $\|\n \bar{\eta}\|_{L^\infty(\R^n)}\leq 2/\rho$. Since $u$ is stable in $\R^n\setminus \overline{B_\rho}$, we have that for $\f:=\f_{i,R}$, 
\begin{align}\label{prop:L2:stabty}
0\leq Q(\bar{\eta} \f, \bar{\eta} \f; \O_R) &= \int_{\O_R} |\n (\bar{\eta} \f)|^2 \, dx - \int_{\de \O_R} H (\bar{\eta} \f)^2 \, d\H^{n-1} \notag  \\
& =\int_{\O_R} |\n \bar{\eta}|^2 \f^2 \, dx + \int_{\O_R} (\n \f  \cdot (\f \n \bar{\eta}^2)+  |\n \f|^2\bar{\eta}^2) \, dx - \int_{\de \O_R} H (\bar{\eta} \f)^2 \, d\H^{n-1} \notag \\
& = \int_{\O_R} |\n \bar{\eta}|^2 \f^2 \, dx + Q(\f, \bar{\eta}^2\f ; \O_R). 
\end{align}
On the other hand, since the eigefunction $\f$ satisfies $Q(\f, \phi) = \l_{i,R} \langle \f, \phi\rangle_{L^2(\O_R)}$ for any test function $\phi \in H^1_F(\O_R)$, we have $Q(\f, \bar{\eta}^2\f ) = \l_{i,R} \|\bar{\eta}\f\|^2_{L^2(\O_R)}$. Thus, \eqref{prop:L2:stabty} implies
\begin{equation*}
-\l_{i,R} \|\bar{\eta}\f\|^2_{L^2(\O_R)} = -Q(\f, \f \bar{\eta}^2; \O_R) \leq \int_{\O_R} |\n \bar{\eta}|^2 \f^2 \, dx \leq 4 \rho^{-2}\|\f\|^2_{L^2(\O_R)} =  4 \rho^{-2},
\end{equation*}
so that \eqref{prop:L2:lambda} yields
\begin{equation}
\|\bar{\eta}\f\|^2_{L^2(\O_R)} \leq 4 \rho^{-2}/(-\l_{i,R}) \leq (4/c_0) \rho^{-2} = C\rho^{-2}. 
\end{equation}
Now, we can conclude the desired estimate \eqref{prop:L2:est1}:
\[
\int_{\O_R\setminus B_{2\rho}} \f^2 \, dx \leq \int_{\O_R} (\bar{\eta}\f )^2 \, dx \leq C\rho^{-2}. 
\]
\end{proof}

\emph{Claim that for all $\rho_0\leq \rho < 2\rho \leq R$, we have} 
\begin{equation}\label{prop:L2:est2}
\int_{B_{\rho}\cap \O_R} (\f_{i,R}^2 + |\n \f_{i,R}|^2) \, dx \leq c
\end{equation}
for some constant $c=c(\rho, u)$ depending on $\rho$ and $u$ only.
\begin{proof}
Again, denote $\f:=\f_{i,R}$ and let $\bar{\eta}$ be the cut-off function from above. Define $$\eta:=1-(1-\bar{\eta})^2.$$ 
Then $\eta$ is a smooth cut-off function such that $0\leq \eta\leq 1$, $\eta = 0$ in $B_\rho$, $\eta = 1$ in $\R^n\setminus B_{2\rho}$, and 
\begin{equation}\label{prop:L2:cutoff}
|\n \eta|^2= 4(1-\bar{\eta})^2|\n \bar{\eta}|^2 \leq 16 \rho^{-2} (1-\eta) \leq 16\rho^{-2} (1-\eta^2). 
\end{equation}
In order to establish \eqref{prop:L2:est2}, it suffices to show that 
\begin{equation}\label{prop:L2:est2red}
\int_{B_\rho\cap \O_R}|\n \f|^2 \, dx \leq \bar{c}(\rho,u), 
\end{equation}
as $\int_{B_\rho\cap \O_R} \phi^2\, dx \leq \|\phi\|^2_{L^2(\O_R)}=1$.
Since $u$ is stable in $\R^n\setminus \overline{B_\rho}$, we have $Q(\eta \f, \eta\f)\geq 0$, so that
\begin{equation}\label{prop:L2:difQ1}
Q(\f, \f) - Q(\eta \f, \eta\f) \leq Q(\f, \f) = \l_{i,R} < 0. 
\end{equation}
On the other hand, since $2 \eta\f \n\eta \cdot \n \f \leq \eps \rho^2 |\n\eta|^2 |\n \f|^2 + \frac{1}{\eps \rho^2} \eta^2\f^2$ for all $\eps>0$, we have
\begin{align}\label{prop:L2:difQ2}
\l_{i,R} &\geq Q(\f, \f) - Q(\eta \f, \eta\f) = \notag \\
&=  \int_{\O_R} (1-\eta^2) |\n \f|^2  \, dx - \int_{\de \O_R} (1-\eta^2) H \f^2 \, d\H^{n-1} 
- \int_{\O_R} \left( |\n \eta|^2 \f^2 + 2 \eta\f \n\eta \cdot \n \f \right)\, dx \notag  \\
&\geq \int_{\O_R} (1-\eta^2 - \eps \rho^2 |\n\eta|^2) |\n \f|^2 \, dx -  \int_{\O_R} \left( \frac{\eta^2}{\eps \rho^2} + |\n \eta|^2\right) \f^2   \, dx \\
& - \int_{\de \O_R} (1-\eta^2) H \f^2  d\H^{n-1}. \notag
\end{align}
Taking $\eps = 1/32$ and using \eqref{prop:L2:cutoff}, we can bound from below the right-hand side of \eqref{prop:L2:difQ2} by
\begin{equation}\label{prop:L2:penult}
\l_{i,R} \geq \int_{\O_R} \frac{1-\eta^2}{2} |\n \f|^2 \, dx  - C_1 \rho^{-2} \|\f\|^2_{L^2(\O_R)} - \max_{F(u)\cap B_{2\rho}} |H| \int_{\de \O_R} (1-\eta^2) \f^2 \, d \H^{n-1}. 
\end{equation}
for some numerical constant $C_1$. Furthermore, applying the inequality \eqref{prop:trace:L1} to $\psi = (1-\eta^2)\f$ in $\O_R$ (and noting that the Lipschitz constant of $u$ is $L=1$), we get
\begin{align}\label{prop:L2:Sobest}
\int_{\de \O_R} (1-\eta^2)\f^2 \, d \H^{n-1} & \leq  \int_{\O_R} |\n \left((1-\eta^2)\f^2\right)| \, dx \notag \\
& \leq \int_{\O_R} 2 \eta |\n \eta| \f^2 \, dx + \int_{\O_R} (1-\eta^2) 2 |\f| |\n \f| \, dx \notag \\
& \leq C_2\rho^{-1} \|\f\|^2_{L^2(\O_R)} + \e^{-1} \|(1-\eta^2)^{1/2} \f\|^2_{L^2(\O_R)} + \e \int_{\O_R} (1-\eta^2) |\n \f|^2 \, dx \notag \\
& \leq  \left(\frac{C_2}{\rho} + \frac{1}{\e} \right)\|\f\|^2_{L^2(\O_R)} + \e \int_{\O_R} (1-\eta^2) |\n \f|^2 \, dx,
\end{align}
for some numerical constant $C_2$.
Plugging \eqref{prop:L2:Sobest} in \eqref{prop:L2:penult}, and taking into consideration that $\|\f\|_{L^2(\O_R)}=1$, we get 
\begin{align}\label{prop:L2:almost}
\l_{i,R} \geq \int_{\O_R} \left(\frac{1}{2} -\e \max_{F(u)\cap B_{2\rho}} |H| \right) (1-\eta^2) |\n \f|^2 \, dx  - \frac{C_1}{\rho^2} - \left(\frac{C_2}{\rho} + \frac{1}{\e}\right) \max_{F(u)\cap B_{2\rho}} |H|. 
\end{align}
Fixing $\e>0$ small enough so that $\e \max_{F(u)\cap B_{2\rho}} |H| \leq 1/4$, \eqref{prop:L2:almost} yields 
\begin{equation}\label{prop:L2:lambdaboundbelow}
\l_{i,R} \geq \int_{\O_R} \frac{1-\eta^2}{4} |\n \f|^2 \, dx  - c_1,
\end{equation}
for some constant $c_1$ that depends only on $u$ and $\rho$. In particular, if $c_2:=c_1(u,\rho_0)$, then
\begin{equation}\label{prop:L2:boundseig}
-c_2 \leq \l_{i,R} \leq -c_0, 
\end{equation}
and the bounds $c_0$ and $c_2$ depend on $u$ only. Furthermore, as $\l_{i,R}<0$, \eqref{prop:L2:lambdaboundbelow} yields the desired bound \eqref{prop:L2:est2red}:
\[
\int_{\O_R \cap B_\rho} |\n \f|^2\, dx \leq \int_{\O_R} (1-\eta^2) |\n \f|^2 \, dx \leq 4 (c_1 + \l_{i,R}) \leq 4c_1=:\bar{c}. 
\]
\end{proof}
Because of \eqref{prop:L2:est1}, \eqref{prop:L2:est2} and the compactness of both the inclusion $H^1_F(\O_R) \subset L^2(\O_R)$ and the trace operator $H^1_F(\O_R)\to L^2(\de \O_R \cap F(u))$, we can find a sequence $R_l\uparrow \infty$ and globally defined limiting functions $\{\f_i\}_{i=1}^k\subseteq L^2(\O)\cap H^1_{F,\text{loc}}(\O)$ such that as $l\to\infty,$
\begin{align}
\f_{i,R_l} &\to \f_i \quad \text{in } L^2(\O), \quad \label{prop:L2:conv1}\\
\f_{i,R_l} & \to \f_i  \quad \text{in } L^2_\text{loc}(\de \O) \quad \text{and} \quad \n \f_{i,R_l} \rightharpoonup \n \f_i \label{prop:L2:conv2} \quad \text{weakly in } L^2_\text{loc}(\O). 
\end{align}
As a result of \eqref{prop:L2:conv1}, we see that $\{\f_i\}_{i=1}^k$ is an $L^2$-orthonormal set, as well. Furthermore, as each eigenvalue $\l_{i,R}$ is a decreasing function of $R$ that is also bounded \eqref{prop:L2:boundseig}, we have the convergence of the eigenvalues, as well:
\begin{equation}\label{prop:L2:eigconv}
\l_{i,R_l} \searrow \l_i \quad \text{as } l\to\infty, \quad \text{for some }  \l_i\in [-c_2, -c_0].
\end{equation}
Therefore, by \eqref{prop:L2:conv1}, \eqref{prop:L2:conv2} and \eqref{prop:L2:eigconv}, we see that for every $\phi\in C^\infty_c(\overline{\O})$
\[
Q(\f_i, \phi) - \l_i \langle \f_i,\phi\rangle_{L^2(\O)} = \lim_{l\to\infty}\left(Q(\f_{i,R}, \phi; \O_{R_l}) - \l_{i,R_l} \langle \f_{i,R_l},\phi\rangle_{L^2(\O_{R_l})} \right) = 0. 
\]
It follows that $\{\f_i\}_{i=1}^k$ is an $L^2(\O)$-orthonormal set of eigenfunctions in $\O$, with associated eigenvalues $\{\l_i\}_{i=1}^k$.  Denote their linear span by $V$. Then $\dim ~V = \ind(u) = k$ and take any $\phi \in C^\infty_c(\overline{\O})$ such that $\phi \in V^\perp$. For all $R$ large enough such that $B_R\supseteq \text{supp }\phi$, define
\[
\phi_R := \phi - \sum_{i=1}^k a_{i,R} \f_{i,R} \quad \text{in } H^1_F(\O_R), \text{ where } a_{i,R}:=\langle \phi, \f_{i,R} \rangle_{L^2(\O_R)}.
\]
Thus, $\phi_R$ is orthogonal to the span of $\{\f_{i,R}\}_{i=1}^k$, so that $Q(\phi_R,\phi_R; \O_R)\geq 0$ and 
$$
Q(\phi_R, \f_{i,R};\O_R) = \l_{i,R} \langle \phi_R, \f_{i,R}\rangle_{L^2(\O_R)} = 0. 
$$
As a consequence,
\begin{align}\label{prop:L2:final}
Q(\phi, \phi)  & = Q(\phi_{R_l}, \phi_{R_l}; \O_{R_l}) +  \sum_{i,j} a_{i,R_l} a_{j,R_l} Q(\f_{i, R_l}, \f_{j,R_l}; \O_{R_l}) \geq \sum_{i=1}^k  a_{i,R_l}^2 \l_{i,R_l}.
\end{align}
Now we just observe that 
$$\lim_{l\to\infty} a_{i,R_l} = \langle \phi, \f_{i}\rangle_{L^2(\O)} = 0$$ on account of the $L^2$-convergence \eqref{prop:L2:conv1}. Since by \eqref{prop:L2:boundseig}, the eigenvalues $\l_{i,R}$, $i=1,\ldots, k$, are uniformly bounded for large $R$, \eqref{prop:L2:final} implies that $Q(\phi,\phi)\geq 0$ by letting $l\to\infty$.
\end{proof}

\bigskip

\section{Finite index is finite topology in dimension two}\label{sec:dimtwo}

In this section, we zero in on the global, finite-index solutions of the one-phase FBP in $\R^2$ and prove that they have finite topology. We first show that such solutions satisfy a \emph{finite total mean curvature bound.} 

\begin{lemma}\label{lem:meancurvest}
Let $u:\R^2 \to [0,\infty)$ be a global solution of \eqref{FBP} with a connected positive phase $\O=\{u>0\}$ and a finite Morse index. Then there exists $\rho>0$ such that
\begin{equation}\label{lem:curvbd:eq}
\int_{F(u)\setminus \overline{B_{2\rho}}} H \, d \H^1 \leq3 \pi.
\end{equation}
In particular, $\int_{F(u)} H \, d \H^1 < \infty$ (note that by Proposition \ref{prop:facts}, we have $H\geq 0$). 
\end{lemma}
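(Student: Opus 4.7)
The strategy is to plug a carefully calibrated Lipschitz radial cut-off into the stability inequality of Proposition~\ref{prop:stab} and to exploit the special features of two dimensions (zero $1$-capacity of a point and linear growth of circumferences).

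Let $\rho_0>0$ be such that $u$ is stable in $\R^2\setminus \overline{B_{\rho_0}}$ (as provided by Proposition~\ref{prop:stab}), and fix any $\rho>\rho_0$. For each $R>2\rho$, I would consider the radial Lipschitz function $\eta_R:\R^2\to [0,1]$ given by
\[
\eta_R(x) = \begin{cases}
0, & |x|\leq \rho, \\
(|x|-\rho)/\rho, & \rho\leq |x|\leq 2\rho, \\
1, & 2\rho\leq |x|\leq R, \\
\log(R^2/|x|)/\log R, & R\leq |x|\leq R^2, \\
0, & |x|\geq R^2.
\end{cases}
\]
Since $\rho>\rho_0$, the support of $\eta_R$ is compactly contained in $\R^2\setminus \overline{B_{\rho_0}}$, so standard mollification (at a scale smaller than $(\rho-\rho_0)/2$), together with the $L^2$-trace inequalities of the Appendix, lets $\eta_R$ be used as a test function in the stability inequality on $\R^2\setminus \overline{B_{\rho_0}}$. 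This yields
\[
\int_{\O}|\n \eta_R|^2\,dx \;\geq\; \int_{F(u)} H\,\eta_R^2\,d\H^1.
\]

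A direct calculation in polar coordinates gives
\[
\int_{\R^2}|\n\eta_R|^2\,dx = \int_\rho^{2\rho}\frac{2\pi r}{\rho^2}\,dr + \int_R^{R^2}\frac{2\pi r}{r^2(\log R)^2}\,dr = 3\pi + \frac{2\pi}{\log R},
\]
so the inner linear transition contributes exactly $3\pi$, while the outer logarithmic transition vanishes as $R\to\infty$. Since $H\geq 0$ on $F(u)$ by Proposition~\ref{prop:facts} and $\eta_R\equiv 1$ on $B_R\setminus B_{2\rho}$, the right-hand side is bounded below by $\int_{F(u)\cap (B_R\setminus B_{2\rho})} H\,d\H^1$. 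Combining and letting $R\to\infty$ (monotone convergence applies since $H\geq 0$) delivers the desired $\int_{F(u)\setminus\overline{B_{2\rho}}} H\,d\H^1 \leq 3\pi$. The remaining clause $\int_{F(u)} H\,d\H^1<\infty$ is then immediate, as $F(u)\cap\overline{B_{2\rho}}$ is a compact portion of a smooth curve on which $H$ is smooth.

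The only genuinely delicate point is the justification of the Lipschitz cut-off $\eta_R$ as an admissible test function in the stability inequality; this is precisely where the specialized $L^2$-trace theory of the Appendix enters. The remainder of the argument boils down to the two radial integrals displayed above, and the value $3\pi$ is simply the Dirichlet energy of the cheapest radial linear interpolant between $\partial B_\rho$ and $\partial B_{2\rho}$, which is what forces the chosen shape of the inner transition.
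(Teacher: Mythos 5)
Your proof is correct and follows essentially the same route as the paper: stability outside a compact ball from Proposition~\ref{prop:stab}, the same two-part (linear then logarithmic) radial cut-off as test function, the identical computation $\int_{\R^2}|\n\eta_R|^2 = 3\pi + 2\pi/\log R$, and passage to the limit $R\to\infty$. The only cosmetic difference is that the paper takes stability on $\R^2\setminus\overline{B_{\rho/2}}$ and builds the cut-off starting at $\rho$, whereas you introduce an extra radius $\rho>\rho_0$; and for the final finiteness clause the paper spells out a finite covering of the compact set $F(u)\cap\overline{B_{2\rho}}$, which your phrasing ``compact portion of a smooth curve'' compresses but does not change.
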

\begin{proof}
Since $\ind(u)<\infty$, Proposition \ref{thm:std} implies that $u$ is stable in $\R^n\setminus \overline{B_{\rho/2}}$  for some $\rho>0$.  Define the following logarithmic cutoff test function $\phi_R\in C^{0,1}_c(\R^2)$ whose $\text{supp}(\phi_R)\Subset  \R^n\setminus \overline{B_{\rho/2}}$:
\[
\phi_R(x) = \begin{cases}
0 & \text{for } |x| \leq \rho, \\ 
(|x| - \rho)/\rho & \text{for } \rho< |x| \leq 2\rho, \\
1 &  \text{for } 2\rho< |x| \leq R, \\
2 - (\log |x|)/\log R & \text{for } R< |x| \leq R^2, \\
0 & \text{when } |x|>R^2. 
\end{cases}
\]
We deduce by the stability of $u$ in $\R^n\setminus \overline{B_{\rho/2}}$ that for all $R$ large enough, 
\begin{align*}
\int_{F(u)\cap (B_{R}\setminus \overline{B_{2\rho}})} H \, d\H^1 & \leq \int_{F(u)} H \phi_R^2 \, d\H^1 \leq \int_{\O} |\n \phi_R|^2 \, dx \leq  \int_{\R^2} |\n \phi_R|^2 \, dx  \\ 
& \leq \int_{B_{2\rho}\setminus B_\rho} \frac{1}{\rho^2} \, dx + \int_{B_{R^2} \setminus B_R} \frac{1}{(\log^2 R)|x|^2}\,dx  = 3 \pi + \frac{2\pi}{\log R}.
\end{align*}
Taking $R \to \infty$ above, we get precisely \eqref{lem:curvbd:eq}.

Now for every $p\in F(u)\cap \overline{B_{2\rho}}$, there exists a small enough disk $B_r(p)$ such that $F(u)\cap \overline{B_r(p)}$ is a graph of a smooth function. Thus, $b(p):=\int_{F(u)\cap \overline{B_r(p)}} H \, d\H^1<\infty$. Since $F(u)\cap \overline{B_{2\rho}}$ is compact, there are finitely many disks $\{B_{r_k}(p_k)\}_{k=1}^N$ with $p_k\in  F(u)\cap \overline{B_{2\rho}}$ that cover it, implying
\[
\int_{F(u)\cap \overline{B_{2\rho}}} H \, d\H^{1} \leq \sum_{k=1}^N \int_{F(u)\cap B_{r_k}(p_k)} H \, d\H^{1} = \sum_{k=1}^N b(p_k) < \infty.
\]
Adding this estimate to \eqref{lem:curvbd:eq}, we get that the total mean curvature $\int_{F(u)} H \, d\H^1<\infty$.

\end{proof}

We are now in a position to demonstrate that a finite-index solution in the plane has a free boundary, consisting of finitely many connected components. 

\begin{prop}\label{prop:fincurvtop}
 Suppose that $u:\R^2 \to [0,\infty)$  is a global classical solution of \eqref{FBP} with a connected positive phase $\O=\{u>0\}$ and finite index. Then $u$ has finite topology, that is, $F(u)$ consists of finitely many connected components. 
\end{prop}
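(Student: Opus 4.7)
The plan combines the total mean curvature bound from Lemma~\ref{lem:meancurvest}, $\int_{F(u)} H\, d\H^1 < \infty$, with the planar Gauss--Bonnet theorem.

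\textbf{Compact components.} Suppose $\g$ is a compact connected component of $F(u)$. Since $\O$ is connected and unbounded, $\g$ bounds a bounded region $D_\g \subset \R^2 \setminus \overline{\O}$, and the outer unit normal $\nu = -\n u$ along $\g$ is the inward normal of $D_\g$. Applying Gauss--Bonnet to $D_\g$ yields $\int_\g H\, d\H^1 = 2\pi$. Combined with $H \geq 0$ (Proposition~\ref{prop:facts}) and the total mean curvature bound, the number of compact components of $F(u)$ is at most $\frac{1}{2\pi}\int_{F(u)} H\, d\H^1 < \infty$.

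\textbf{Setup.} Using Lemma~\ref{lem:Sard}, Proposition~\ref{prop:stab}, and Proposition~\ref{prop:finend}, fix $\rho > 0$ large enough that all compact components of $F(u)$ lie in $B_\rho$, $\de B_\rho \pitchfork F(u)$ in a finite set, $u$ is stable in $\R^2 \setminus \overline{B_\rho}$, and $\O \setminus \overline{B_\rho}$ has finitely many connected components $U_1, \dots, U_m$. The finite set $F(u) \cap \de B_\rho$ forces only finitely many non-compact components of $F(u)$ to intersect $\overline{B_\rho}$; each remaining one lies entirely in $\R^2 \setminus \overline{B_\rho}$ and in the boundary of a unique end $U_j$. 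Hence it suffices to bound, for each $j$, the number $n$ of such components in $\de U_j$.

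\textbf{Gauss--Bonnet on a truncated end.} Fix $j$, let $U = U_j$, and denote the non-compact components in $\de U \setminus \overline{B_\rho}$ by $\g_1, \dots, \g_n$. Since each $\g_i$ has finite total curvature, its tangent vector has limits at both infinite ends, so $\g_i$ asymptotes to a straight line there. For $R$ sufficiently large (chosen by Lemma~\ref{lem:Sard} with $\de B_R \pitchfork F(u)$), each $\g_i$ crosses $\de B_R$ at exactly two points, with the interior crossing angle of $V_R := U \cap B_R$ tending to $\pi/2$ as $R \to \infty$. Apply Gauss--Bonnet with corners to $V_R$:
\[
\int_{\de V_R} \k_g\, d\H^1 + \sum_k (\pi - \theta_k) = 2\pi\, \chi(V_R).
\]
The geodesic curvature $\k_g$ equals $-H$ on $F(u)$-arcs, $-1/\rho$ on $\de B_\rho$-arcs, and $+1/R$ on $\de B_R$-arcs; the corresponding integral is uniformly bounded by $\int_{F(u)} H\, d\H^1 + 4\pi$, independently of $R$ and $n$. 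Each non-compact component $\g_i$ contributes an interior slit arc to $V_R$, decreasing $\chi(V_R)$ by at least one relative to the underlying baseline topology of $U$, while introducing two corners on $\de B_R$ with $\pi - \theta_k \to \pi/2$. Assembling the contributions across all $n$ slits gives
\[
\pi n + O(1) \;=\; 2\pi\, \chi(V_R) \;\leq\; -2\pi n + O(1),
\]
which forces $3\pi n = O(1)$. Hence $n$ is bounded, and summing over the finite collection $U_1, \dots, U_m$ completes the proof.

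\textbf{Main obstacle.} The main obstacle is the topological bookkeeping: verifying that each non-compact $F(u)$-component decreases $\chi(V_R)$ by at least one and contributes two asymptotically perpendicular corners on $\de B_R$. Both facts rest on the asymptotic straightness of each $\g_i$, which follows from its finite total curvature.
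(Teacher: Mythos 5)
Your approach — bounding the number of free boundary components via the finite total curvature bound and a Gauss--Bonnet argument on a truncated end — is a legitimate alternative to the paper's more elementary ``sector'' argument, and it parallels the classical treatment of minimal surfaces of finite total curvature. However, as written, the key step is not correct, and you flag it yourself as unresolved.

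The gap is in the topological bookkeeping. You assert that each non-compact component $\g_i$ ``contributes an interior slit arc to $V_R$, decreasing $\chi(V_R)$ by at least one,'' and from this deduce $2\pi\chi(V_R) \leq -2\pi n + O(1)$. But the arcs $\g_i \cap B_R$ are pieces of $\de V_R$, not interior slits, and there is no reason that each such boundary arc creates a hole in $V_R$; in general it does not. So the estimate $\chi(V_R) \leq -n + O(1)$ is unjustified and in fact false. What you actually have, for the connected component $V_R$ of $U\cap B_R$ that borders $\de B_\rho$, is only $\chi(V_R) \leq 1$. This weaker bound does still close the argument, because the corner contributions appear on the other side of Gauss--Bonnet with the favorable sign: the $\approx 2n$ near-perpendicular corners on $\de B_R$ contribute $\approx n\pi$ to $\sum_k(\pi - \theta_k)$, while $\int_{\de V_R}\k_g \geq -\int_{F(u)} H\, d\H^1 - 2\pi$, so $n\pi + O(1) = 2\pi\chi(V_R) \leq 2\pi$, forcing $n$ to be bounded. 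You should replace your displayed chain of inequalities with this; the $\leq -2\pi n$ term has no basis.

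Two further points need attention. First, the asymptotic straightness of each $\g_i$ and the resulting ``exactly two transversal crossings of $\de B_R$, nearly perpendicularly'' require the tail total curvature of each $\g_i$ to be small (well below $\pi$), not merely finite. The paper achieves this by choosing the truncation radius large enough that $\int_{F(u)\setminus\overline{B_R}} H\, d\H^1 < \e$ with $\e < \pi/2$, using $\int_{F(u)} H < \infty$; you would need the same refinement, since Lemma~\ref{lem:meancurvest} by itself only gives the tail bound $3\pi$. Second, without that smallness, a single $\g_i$ could a priori cross $\de B_R$ more than twice, and the additional corners would not all be near-perpendicular, so the corner count ``$2n + O(1)$'' also rests on the small-tail-curvature hypothesis. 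For comparison, the paper avoids Gauss--Bonnet entirely: after making the tail curvature $<\e$, it shows each remaining component is nearly straight and bounds a convex zero-phase region containing an open sector of opening $\pi - 2\e$, and then observes that at most two disjoint such sectors fit in the plane. That route sidesteps the Euler-characteristic accounting altogether.
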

\begin{proof}

First, we assert that there are only finitely many connected components $\a$ of $F(u)$ such that $\a \cap \overline{B_R}\neq \emptyset$. Indeed, if there were infinitely many such $\{\a_k\}_{k=1}^\infty$ then by compactness, after possibly relabeling the subsequence, we could find points $p_k\in \a_k$ such that $p_k\to p\in \overline{B_{R}}$. But since $F(u)$ is closed, it must be that $p_\infty\in F(u)$. However, since $u$ is a classical solution of \eqref{FBP}, there exists a small enough $r>0$ such that $F(u)\cap B_r(p)$ is the graph of a smooth function over the tangent line to $F(u)$ at $p$. In particular, there is only one connected component of $F(u)$ intersecting $B_r(p)$, which contradicts the fact that the approaching sequence $p_k\to p$ came from distinct connected component of $F(u)$. 

Fix $0<\e<\pi/2$ small. By Lemma \ref{lem:meancurvest}, there exists an $R>0$ large enough such that 
\begin{equation}\label{prop:fincurvtop:curv}
\int_{F(u)\setminus \overline{B_R}} H d \H^1 < \e.
\end{equation}
We will show that there can be at most two connected components $\a$ of $F(u)$ such that $\a \subset \R^2\setminus \overline{B_R}$, which will complete the proof of the proposition.  For the purpose, we point out that if $\a=\a(s)$ is parametrized by arclength, such that $(\a',\nu)$ is a positive basis of $\R^2$, then the mean curvature $H$ of $\a$ with respect to the outer unit normal $\nu$ to $\a\subset \de \O$,
\begin{equation}
0\leq H(\a(s)) = \a''(s)\cdot \nu = |\a''(s)| =\k(s)
\end{equation}
coincides with the curvature $\k(s)$ of $\a$. Hence, its integral
\begin{equation}\label{prop:fincurvtop:turn}
\int_{\a([s_1,s_2])} H d\H^{1} = \int_{s_1}^{s_2} \k(s) ds
\end{equation}
gives the angle of turning of the unit tanget $\a'(s)$ as we trace the curve from $s_1$ to $s_2$. In particular, the smooth submanifold $\a$ cannot be diffeomorphic to a circle, since we would then have $\int_\a H \, d\H^1 = 2\pi > \e$, contradicting \eqref{prop:fincurvtop:curv}. Thus, each of these $\a$ is diffeomorphic to $\R$.

Pick a connected component $\a$ of $F(u)$ that is contained in $\R^2\setminus \overline{B_R}$, and denote by $Z_\a$ the connected component of $\{u=0\}$ bordering $\a$. Since $H\geq 0$, $Z_\a$ is a closed convex subset of $\R^2$. Let $p\in \a$. Claim there exists an open sector with a vertex at $p$,
$$
S_\a=\{x\in \R^2: x= p+ s v_1 + t v_2, ~ s,t > 0\}  \quad \text{for some unit vectors } v_1, v_2\subset \R^2,
$$ 
such that $\angle(v_1, v_2)= \pi -2\e$ and $S_\a\subset Z_\a$. By applying a rigid motion, we may assume that $p=0$, the tangent line to $F(u)$ at $p$ is the $x_1$-axis and that the convex $Z_\a$ is contained in the half-plane $\{x_2\geq 0\}$. Take $v_1$ and $v_2$ to make angles $\e$ and $\pi-\e$ with the $x_1$-axis, respectively, and denote  by $\mathcal{R}_1, \mathcal{R}_2$ the open rays generated by them. Parametrize $\a$ by arclength such that $\a(0)=0$ and $\a'(0)=e_1$. Note that because of the mean curvature bound \eqref{prop:fincurvtop:curv}, the angle of turning of $\a'$ as we trace the curve from $0=\a(0)$ to $q=\a(s)$, $s\geq 0$, 
\begin{equation}\label{prop:fincurvtop:theta}
\theta(q) := \int_{\a([0,s])} H\,d\H^1 < \e <\pi/2,
\end{equation}
so that $\a'(s)\cdot e_1 = \cos \theta(q) \geq 0$ whenever $s\geq 0$. In particular, $\a(s)$ is inside the closed first quadrant of the plane for all $s\geq 0$.  Similarly, $\a(s)$ belongs to the closed second quadrant for all $s\leq 0$.

Assume that the sector $S_\a\not\subset Z_\a$.  Then either $\mathcal{R}_1$ or $\mathcal{R}_2$ intersects $\a$: without loss of generality, we may assume that it is $\mathcal{R}_1$. Since $\a(s)$ belongs to the closed second quadrant for $s\leq 0$, whereas the ray $\mathcal{R}_1$ is contained in the open first quadrant, there exists a smallest positive value $s_*$ such that $\a(s_*) \in \mathcal{R}_1\cap \a$. However, in order to intersect $\mathcal{R}_1$ at $q=\a(s_*)$, the curve $\a$ needs to have a tangent vector $\a'(s_*)$ at $q$ that makes an angle $\geq \e$ with the $x_1$-axis. This contradicts \eqref{prop:fincurvtop:theta}.

Now we can complete the proof after observing that for $\e>0$ small enough, there cannot be more than two disjoint sectors $S_{\a_i}\subseteq Z_{\a_i}$, $i=1,2$, of opening $(\pi-2\e)$. 

\end{proof}

\begin{proof}[Proof of main classification Theorem \ref{thm:class}]
According to Proposition \ref{prop:fincurvtop}, $F(u)$ has finitely many connected components. By Traizet's Theorem \cite[Theorem 12]{Traizet},  $u$ is either the one-plane solution, or the disk-complement solution, or the double hairpin solution, up to a similarity transformation. In the first case, $H\equiv 0$ and $u$ is obviously stable. The computation that the Morse index is $1$ in the remaining two cases is carried out in the next section: in Proposition \ref{prop:relind} and Lemma \ref{lem:calcindex}.
\end{proof}

\section{Computations of the Morse index}\label{sec:Morsecomp}

Let $u:\R^2\to [0,\infty)$ denote either the disk-complement solution $L(x):=(\log|x|)^+$ or the double hairpin solution  $H(x)$ of \cite{HHP}, and $\O:=\{u>0\}$ be the positive phase of $u$. We will identify $\R^2\simeq \C$, where the standard complex variable $z:=x_1+ix_2$. Since $u$ is harmonic and smooth up to the free boundary $F(u)=\de \O$,
\begin{equation}
G(z) := 2 \de_z u = u_{x_1} - i u_{x_2}, \quad z\in \O,
\end{equation}
is a holomorphic function in $\O$ that extends smoothly to $\de \O$, and satisfies 
\begin{equation}
|G(z)|= 1 \quad \text{whenever } z\in \de \O. 
\end{equation}
In both cases $G:\O\to \Dd$ maps $\O$ biholomorphically onto its image $\Sigma:=G(\O)$:
\begin{itemize}
\item when $u=L$, $\Sigma = \Dd\setminus \{0\}$ and $G(z)= 1/z$ extends to a smooth bijection from $\overline{\O} = \{|z|\leq 1\}$ to $\tilde{\Sigma}:=\overline{\Dd}\setminus \{0\}$, mapping $F(u)$ bijectively onto $\de \Dd$; \\
\item when $u=H$, $\Sigma = \Dd$ and $G$ extends to a smooth bijection from $\overline{\O}$ to $\tilde{\Sigma}:=\overline{\Dd}\setminus \{\pm 1\}$, mapping $F(u)$ bijectively onto $\de \Dd \setminus \{\pm 1\}$ (see \cite[Section 4]{HHP}).\\
\end{itemize}

Let $(\xi_1, \xi_2)$ denote the standard Euclidean coordinates on $\Dd$ and $\z:=\xi_1 + i \xi_2$ -- the corresponding complex coordinate. We will use the coordinates $\zeta\in \Sigma$  to parametrize $\O$ via $\zeta \to z := G^{-1}(\zeta)$.

\begin{lemma}\label{lem:coordchange} Let $u=L$ or $u=H$, and $\O=\{(x_1, x_2)\in \R^2:u(x_1,x_2)>0\}$. Then 
\begin{equation}\label{lem:coordchange:eq}
\begin{split}
Q(\phi, \phi) &=\int_{\O} |\n \phi|^2 \, dx - \int_{\de \O} H \phi^2 \, d\H^1 \\
&= \int_{\Dd} |\nabla \psi|^2 \, d\xi - \int_{\de \Dd} \psi^2\, d\H^1, \quad \text{for all } \phi \in C^\infty_c(\overline{\O}) \text{ and } \psi=\phi\circ G^{-1} \in C^\infty_c(\tilde{\Sigma}).
\end{split}
\end{equation}
\end{lemma}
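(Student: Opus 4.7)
The plan is to treat the bulk and boundary terms in \eqref{lem:coordchange:eq} separately. For the bulk integral, I would invoke the conformal invariance of the Dirichlet integral in two dimensions: since $G:\O\to\Sigma$ is biholomorphic, the Cauchy--Riemann equations give that $DG$ is a conformal (scaled rotation) matrix with $\det DG = |G'|^2$, so $|\nabla \phi(z)|^2 = |G'(z)|^2 \,|\nabla \psi|^2(G(z))$. The change of variable $\zeta = G(z)$ then yields $\int_\O |\nabla \phi|^2\,dx = \int_\Sigma |\nabla \psi|^2\,d\xi$, which equals $\int_\Dd |\nabla \psi|^2\,d\xi$ since $\Sigma$ and $\Dd$ differ by at most a single point of measure zero.

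The crux of the proof is the pointwise identity
\begin{equation*}
|G'(z)| = H(z) \quad \text{for every } z\in F(u)=\partial \O,
\end{equation*}
which I would derive as follows. At $p\in \partial\O$, work in the orthonormal frame $(\tau, \nu)$ with $\tau$ a unit tangent to $\partial \O$ and $\nu$ the outer normal, so that $\nabla u=-\nu$ on $\partial\O$ (using $u|_{\partial\O}\equiv 0$ and $|\nabla u|=1$). Differentiating $u\circ\alpha\equiv 0$ along an arclength parametrization $\alpha$ of $\partial\O$ and using the Frenet relation $\alpha''=H\nu$ (with the orientation yielding $H\geq 0$, guaranteed by Proposition \ref{prop:facts}) gives $u_{\tau\tau}=H$; harmonicity then forces $u_{\nu\nu}=-H$; and differentiating $|\nabla u|^2\equiv 1$ tangentially produces $u_{\tau\nu}=0$. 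Consequently $|D^2 u|^2=2H^2$ on $\partial\O$. On the other hand, $G=u_{x_1}-iu_{x_2}$ together with the harmonicity of $u$ gives $G'=u_{x_1x_1}-iu_{x_1x_2}$ and $|G'|^2=\tfrac12|D^2u|^2$ throughout $\O$; combining these yields $|G'|=H$ on $\partial\O$.

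The boundary term then follows by direct change of variables. With $\beta(s):=G(\alpha(s))$ parametrizing $G(\partial\O)\subset \partial\Dd$, one has $|\beta'(s)|=|G'(\alpha(s))|=H(\alpha(s))$. In the $L$ case, $G$ maps $\partial\O$ bijectively onto all of $\partial\Dd$; in the $H$ case onto $\partial\Dd\setminus\{\pm 1\}$, but since $\psi\in C^\infty_c(\tilde\Sigma)$ vanishes near $\pm 1$, the distinction is invisible in the integral. Hence
\[
\int_{\partial\O} H\phi^2 \,d\H^1 = \int H(\alpha(s))\phi^2(\alpha(s))\,ds = \int \psi^2(\beta(s))|\beta'(s)|\,ds = \int_{\partial \Dd}\psi^2\,d\H^1,
\]
which, combined with the bulk identity, gives \eqref{lem:coordchange:eq}. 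The main point requiring care is the derivation $|G'|=H$ on $\partial\O$, which distills the free boundary conditions $u=0$ and $|\nabla u|=1$ together with harmonicity into a single clean statement about the holomorphic representative $G$; everything else is conformality bookkeeping.
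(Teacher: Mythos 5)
Your proof is correct, and it reaches the same pivotal identity as the paper -- namely $|G'| = H$ on $\partial\Omega$ -- but by a genuinely different route. The paper works entirely in complex notation: it first derives $H = \mathrm{Re}(-\overline{G}^{\,2}G')$ on $\partial\Omega$ from $H = -\tfrac12 \nabla u \cdot \nabla|\nabla u|^2$, and then, rather than computing any Hessian entries, it invokes the pushforward of the outer unit normal under the conformal map $G$ (that is, $G'(z)(-\overline{G(z)}) = |G'(z)|\,G(z)$ on $\partial\Omega$) to simplify $\mathrm{Re}(-\overline{G}^{\,2}G')$ to $|G'|$; the nonnegativity of $H$ then comes for free. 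Your derivation instead pins down the full Hessian $D^2u$ in the tangent--normal frame at a boundary point -- obtaining $u_{\tau\tau}=H$, $u_{\nu\nu}=-H$, $u_{\tau\nu}=0$ from the two free boundary conditions plus harmonicity -- and then uses the identity $|G'|^2 = \tfrac12|D^2u|^2$ (valid throughout $\Omega$ for harmonic $u$) to conclude $|G'| = |H|$, finally invoking Proposition \ref{prop:facts} to remove the absolute value. Your version is more geometric and makes the boundary Hessian structure explicit, which is illuminating in its own right; the paper's is more compact and never leaves the holomorphic framework. One small remark: the equality $|G'| = |H|$ you obtain only becomes $|G'| = H$ after appealing to $H \ge 0$, which you correctly flag; the paper's argument delivers $H = |G'| \ge 0$ directly without that extra input. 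Everything else -- the conformal invariance of the Dirichlet energy, the measure-zero discrepancy between $\Sigma$ and $\Dd$, and the arclength change of variables on the boundary -- matches the paper's treatment.
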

\begin{proof}
By the conformal invariance of the Dirichlet energy, we have
\begin{equation}
\int_{\O} |\n \phi|^2 \, dx =  \int_{\Dd} |\nabla \psi|^2 \, d\xi, \quad \text{for } \phi \in C^\infty_c(\overline{\O}) \text{ and } \psi = \phi\circ G^{-1}.
\end{equation}
To get the equality of the boundary integral terms in \eqref{lem:coordchange:eq}, we first note that for any $z\in \de\O$,
\begin{equation}\label{lem:coordchange:H1}
H(z) = \text{div}\left(\frac{\nabla u}{|\nabla u|}\right) = - \frac{\n u \cdot \n |\n u|^2}{2 |\n u|^3} = - \text{Re}(\overline{G(z)} \de_{z} |G(z)|^2) = \text{Re}(-\overline{G(z)}^2 G'(z) ),
\end{equation}
as $|\n u|^2 =1$ on $\de \O$ and $G(z) = u_{x_1}- i u_{x_2}$. Furthermore, since $G:\overline{\O}\to \tilde{\Sigma}$ is a conformal map, its (real-valued) Jacobian $G_*$ maps the outer unit normal vector $\nu\in T_z \overline{\O}$, $z\in \de \O$, to the the exterior unit vector $v$ to $\de \Dd$ at $\zeta = G(z)$, multiplied by the conformal factor $|G'(z)|$:
\begin{equation}\label{lem:coordchange:pushforward}	
G_* (\nu) = |G'(z)| v,
\end{equation}
Using the standard identification of real 2-vectors with complex numbers, we have 
\begin{align*}
\nu = -\n u  &\longleftrightarrow  - u_{x_1} - i u_{x_2} = - \overline{G(z)} \\
v  &\longleftrightarrow  \zeta = G(z),
\end{align*}
so that we can rewrite \eqref{lem:coordchange:pushforward} as
\begin{equation}\label{lem:coordchange:H2}
G'(z) (-\overline{G(z)}) = G_* (\nu) = |G'(z)| v= |G'(z)| G(z) \quad \text{for all } z\in \de\O.
\end{equation}
Using \eqref{lem:coordchange:H1},  \eqref{lem:coordchange:H2} and the transformation of length under a conformal map $$|d\zeta| = |G'(z)| |dz|,$$ we obtain that 
\begin{align*}
 H(z) |dz| = \frac{\text{Re}(-\overline{G(z)}^2 G'(z) )}{|G'(z)|} |d\zeta| = \text{Re}(\overline{G(z)} G(z)) |d\zeta| = |d\zeta| \quad \text{on } \de \O,
\end{align*}
since $|G(z)|=1$ on $\de \O$. Hence, after changing variables $(x_1,x_2)\to (\xi_1, \xi_2)$ in the integration, we get the desired equality:
\[
\int_{\de \O} H \phi^2 \, d\H^1 =  \int_{\de \Dd} \psi^2\, d\H^1.
\]
\end{proof}

In view of Lemma \ref{lem:coordchange}, define the quadratic form
\begin{equation}\label{eq:Q0}
Q_0(\psi, \psi):= \int_\Dd |\n \psi|^2 \, d\xi - \int_{\de \Dd} (T\psi)^2 d\H^1 \quad \text{for } \psi \in H^1(\Dd),
\end{equation}
where $T:H^1(\Dd)\to L^2(\de\Dd)$ denotes the trace operator, and denote by $\ind(Q_0)$ the index of $Q_0$ in $H^1(\Dd)$. The associated eigenvalue problem to \eqref{eq:Q0} reads:
\begin{equation}\label{eq:Q0eig}
\left\{
\begin{aligned}
   -\Delta  \psi = \l \psi & \quad \mbox{in } \Dd, \\
   \psi_\nu - \psi = 0 & \quad \mbox{on }  \de\Dd. \\
  \end{aligned}\right.
\end{equation}
Arguing as in Theorem \ref{thm:std}, we see that the Robin problem \eqref{eq:Q0eig} possesses a discrete set of real eigenvalues $\l_k \to \infty$ and a set of corresponding eigenfunctions $\{\psi_k\}_{k\in \N}\subset H^1(\Dd)\cap C^\infty(\overline{\Dd})$, which form an orthonormal basis for $L^2(\Dd)$. Furthermore, $\ind(Q_0)$ is precisely the number of negative eigenvalues of \eqref{eq:Q0eig}. The next proposition relates the index of the disk-complement solution and the double hairpin solution to $\ind(Q_0)$.

\begin{prop}\label{prop:relind} Let $u = L$ or $u = H$. Then $\ind(u)$ equals the index of $Q_0(\cdot, \cdot)$ in $H^1(\Dd)$.
\end{prop}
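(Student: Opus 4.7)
My plan is to exploit Lemma \ref{lem:coordchange}, which says that the change of variables $\phi \mapsto \psi := \phi \circ G^{-1}$ intertwines the quadratic forms $Q(\cdot,\cdot;\O)$ and $Q_0(\cdot,\cdot)$. Since $G:\overline{\O}\to \tilde{\Sigma}$ is a smooth bijection and a bounded subset of $\overline{\O}$ corresponds to a compact subset of $\tilde{\Sigma}$, this map is a bijective linear isomorphism between $\X_\O = C^\infty_c(\overline{\O})$ and $C^\infty_c(\tilde{\Sigma})$. Thus the indices of $Q(\cdot,\cdot;\O)$ on $\X_\O$ and $Q_0$ on $C^\infty_c(\tilde{\Sigma})$ are equal, and the proposition reduces to showing that these two indices of $Q_0$ (on $C^\infty_c(\tilde{\Sigma})$ and on $H^1(\Dd)$) coincide.

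One inequality is immediate: since $C^\infty_c(\tilde{\Sigma}) \subseteq H^1(\Dd)$, any subspace on which $Q_0$ is negative definite on the smaller space remains so on the larger, hence $\ind(u) \leq \ind(Q_0)$. For the reverse inequality, the key step is the density claim
\begin{equation*}
\overline{C^\infty_c(\tilde{\Sigma})}^{H^1(\Dd)} = H^1(\Dd),
\end{equation*}
where $\overline{\Dd}\setminus \tilde{\Sigma}$ is the finite point set $\{0\}$ (in the case $u=L$) or $\{\pm 1\}$ (in the case $u=H$). This is the standard fact that points in $\R^2$ have zero $H^1$-capacity. Concretely, given $w \in H^1(\Dd)$, I would first approximate it by a function in $C^\infty(\overline{\Dd})$, which is bounded, and then multiply by a logarithmic cutoff $\eta_\e$ vanishing near the bad points, satisfying $\eta_\e = 0$ on $\bigcup_{p}B_{\e^2}(p)$, $\eta_\e = 1$ outside $\bigcup_p B_\e(p)$, and $\|\nabla \eta_\e\|_{L^2(\Dd)}^2 \leq C / \log(1/\e) \to 0$. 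Since the cutoff function tends to $1$ pointwise a.e., the dominated convergence theorem handles the $L^2$-part and the $(\eta_\e - 1)\nabla w$ term, while the remaining term $w\nabla \eta_\e$ is bounded in $L^2$ by $\|w\|_{L^\infty}\|\nabla \eta_\e\|_{L^2}\to 0$.

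With density in hand, the reverse inequality follows by a standard perturbation argument. Let $\ell := \ind(Q_0)$ and pick an $\ell$-dimensional subspace $W \subseteq H^1(\Dd)$ on which $Q_0$ is negative definite. Choose a basis $\{w_1, \ldots, w_\ell\}$ of $W$, and use density to find $\tilde{w}_i \in C^\infty_c(\tilde{\Sigma})$ with $\|w_i - \tilde{w}_i\|_{H^1(\Dd)} < \delta$. Since $Q_0$ is continuous on $H^1(\Dd)$ and the Gram matrix of the $\{w_i\}$ with respect to $Q_0$ is negative definite (hence its eigenvalues are negative and bounded away from zero), for $\delta$ small enough the perturbed Gram matrix of $\{\tilde{w}_i\}$ remains negative definite, so $\{\tilde{w}_i\}$ is linearly independent and spans an $\ell$-dimensional subspace of $C^\infty_c(\tilde{\Sigma})$ on which $Q_0$ is negative definite. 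Pulling back through $\phi_i := \tilde{w}_i \circ G \in \X_\O$, Lemma \ref{lem:coordchange} produces an $\ell$-dimensional subspace of $\X_\O$ on which $Q(\cdot,\cdot;\O)$ is negative definite, giving $\ell \leq \ind(u)$.

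The main technical obstacle is the density step: it is the only place where the deletion of the singular set matters, and one must verify that points in $\R^2$ are removable for $H^1$ via the logarithmic cutoff, which is where the special two-dimensional geometry enters. Everything else is linear-algebraic manipulation of the identification of forms furnished by Lemma \ref{lem:coordchange}.
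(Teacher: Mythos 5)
Your proposal is correct and follows essentially the same strategy as the paper: both proofs hinge on Lemma \ref{lem:coordchange} to transport the quadratic form to the disk, and both use the fact that a finite point set in the plane has zero $H^1$-capacity (via the logarithmic cutoff $\eta_\e$) to pass between $C^\infty_c(\tilde{\Sigma})$ and $H^1(\Dd)$. The only difference is organizational: you package the hard direction as a density statement $\overline{C^\infty_c(\tilde{\Sigma})}^{H^1(\Dd)} = H^1(\Dd)$ plus a Gram-matrix perturbation, whereas the paper cuts off a specific $[-Q_0]$-orthonormal family of eigenfunctions (which, being in $C^\infty(\overline{\Dd})$, are automatically bounded) and pulls these back, sidestepping smoothness of the cutoff by working in $H^1_F(\O_{R(\e)})$ rather than $\X_{\O_R}$. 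One small technicality worth flagging in your write-up: $\eta_\e w$ is only Lipschitz, so to land in $C^\infty_c(\tilde{\Sigma})$ as claimed you should either mollify (the support stays compactly inside $\tilde{\Sigma}$ for small mollification radius) or, as the paper does, note from Section \ref{sec:prelim} that the index is unchanged if $\X_\O$ is replaced by its $H^1$-closure $H^1_F(\O)$, which already contains the Lipschitz cutoffs.
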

\begin{proof}
Let $R>0$ and consider the bounded domain $\O_R:=\O\cap B_R$. Let $V\subset \X_{\O_R}$ be a subspace of dimension $\ind(u, \O_R)$, on which $Q(\cdot, \cdot; \O_R)$ is negative definite. Extending each $\phi \in V$ trivially to a function defined on all $\O$, we clearly have $\psi:=\phi\circ G^{-1}\in C^\infty_c(\tilde{\Sigma}) \subset H^1(\Dd)$ since $\supp \phi$ is compact and $G$ is smooth on $\overline{\O}$. Lemma \ref{lem:coordchange} then tells us that
\[
0>Q(\phi,\phi; \O_R)= Q_0(\psi,\psi) \quad \text{for all } \phi \in V.
\]
Hence $\ind(Q_0)\geq \dim\, V = \ind(u, \O_R)$, and taking $R\to \infty$, we obtain $\ind(u)\leq \ind(Q_0).$

To get the opposite inequality, let $k=\ind(Q_0)$ and let $\{\psi_i\}_{i=1}^k \subset C^\infty(\overline{\Dd})$ be an $[-Q_0(\cdot, \cdot)]$-orthonormal set of eigenfunctions for \eqref{eq:Q0eig}, associated to the negative eigenvalues. Denote by $P$ the singular point set of $G^{-1}$ in $\overline{\Dd}$, i.e. $P=\{0\}$ in the case of $u=L$, and $P=\{\pm 1\}$ in the case of $u=H$. Using the logarithmic cut-off function
\[
\eta_\e(\xi) = \begin{cases}0 & |\xi|<\e^2, \\ \frac{\log (|\xi|/\e^2)}{\log (1/\e)} & \e^2\leq |\xi| < \e, \\ 1 & |\xi|\geq \e, \end{cases}
\] 
we can construct approximations $\psi_i^\e(\xi):= \psi_i(\xi) \prod_{p\in P} \eta_\e(\xi - p)$ of the eigenfunctions $\psi_i(\xi)$, $i=1,\ldots,k$, that satisfy:
\begin{enumerate}
\item $\psi_i^\e \in H^1(\Dd)$ with support in $\overline{\Dd}\setminus \bigcup_{p\in P} \overline{B_{\e^2}(p)}$;
\item $\lim_{\e\to 0}\| \psi_i - \psi_i^\e\|_{H^1(\Dd)} = 0. $ 
\end{enumerate}
Define $W^\e:=\text{Span}(\{\psi_i^\e\}_{i=1}^k)$. Because of (2) and the $L^2$-trace inequality, we have 
\begin{equation}\label{prop:relind:contofQ0}
\lim_{\e\to 0} Q_0(\psi_i^\e, \psi_j^\e) = Q_0(\psi_i, \psi_j) = -\d_{ij}, \qquad i,j=1,\ldots, k,
\end{equation}
so that for all small $\e>0$, $Q_0$ is negative definite on $W^\e$, whose $\dim\, W^\e = k$. Furthermore, (1) implies that for every $\e>0$ there exists $R=R(\e)$, such that $\phi^\e:=\psi^\e\circ G$ has compact support in $\overline{\O}\cap B_R$ for every $\psi^\e\in W^\e$. Hence, $V^\e:=\{\psi^\e\circ G\lfloor_{\O_R}: \psi^\e \in W^\e\}\subset H^1_F(\O_{R(\e)})$ and Lemma \ref{lem:coordchange} entails that $Q(\cdot, \cdot, \O_{R(\e)})$ is negative definite on $V^\e$. We conclude that for all small $\e>0$,
\[
\ind(Q_0)=k = \dim\, V^\e \leq \ind(u, \O_{R(\e)}) \leq \ind(u). 
\]
\end{proof}

In the last lemma below, we compute the index of $Q_0$ and complete the proof of Theorem \ref{thm:class}.

\begin{lemma}\label{lem:calcindex} The index of $Q_0$ in $H^1(\Dd)$ is $1$. 
\end{lemma}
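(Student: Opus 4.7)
The plan is to analyze $Q_0$ by first reducing to harmonic functions via a Dirichlet splitting and then diagonalizing in the Fourier basis on $\partial \Dd$. Given any $\psi \in H^1(\Dd)$, let $h$ denote the harmonic extension of the trace $T\psi \in H^{1/2}(\partial \Dd)$ into $\Dd$, and set $v := \psi - h \in H^1_0(\Dd)$. Since $h$ is harmonic and $v$ vanishes on $\partial \Dd$, integration by parts gives $\int_\Dd \n h \cdot \n v\, d\xi = 0$, so
\[
\int_\Dd |\n \psi|^2\, d\xi = \int_\Dd |\n h|^2\, d\xi + \int_\Dd |\n v|^2\, d\xi,
\]
while $\int_{\partial \Dd} (T\psi)^2 \, d\H^1 = \int_{\partial \Dd}(Th)^2 \, d\H^1$ since $h$ and $\psi$ share the same boundary trace. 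Consequently $Q_0(\psi,\psi) = Q_0(h,h) + \int_\Dd |\n v|^2 \, d\xi \geq Q_0(h,h)$, and the analysis reduces to harmonic $\psi=h$.

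I would then expand the boundary data $f = T\psi$ as a Fourier series on $\partial \Dd$: $f = \sum_{k \in \Z} c_k e^{ik\theta}$, whose harmonic extension inside $\Dd$ is $h(r,\theta) = \sum_{k \in \Z} c_k r^{|k|} e^{ik\theta}$. A direct computation based on $\int_\Dd |\n h|^2 = \int_{\partial \Dd} h \cdot h_\nu \, d\H^1$ (Green's formula) together with orthogonality of the Fourier modes yields
\[
\int_\Dd |\n h|^2 \, d\xi = 2\pi \sum_{k \in \Z} |k|\, |c_k|^2, \qquad \int_{\partial \Dd} f^2 \, d\H^1 = 2\pi \sum_{k \in \Z} |c_k|^2,
\]
whence
\[
Q_0(h,h) = 2\pi \sum_{k\in \Z} (|k|-1) |c_k|^2.
\]
The structure of the form is now transparent: the constant mode $k=0$ is the only one contributing negatively, the translation modes $k=\pm 1$ (corresponding to the harmonic functions $\xi_1,\xi_2$, which are precisely the zero-eigenvalue solutions of \eqref{eq:Q0eig}) contribute zero, and all higher modes with $|k|\geq 2$ contribute strictly positively.

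From this expansion, both bounds for $\ind(Q_0)$ follow immediately. On the closed codimension-one subspace $\{\psi \in H^1(\Dd) : \int_{\partial\Dd} T\psi \, d\H^1 = 0\}$ the coefficient $c_0$ vanishes, so $Q_0(\psi,\psi) \geq 0$ there; a standard dimension count (a $2$-dimensional subspace must intersect any codimension-$1$ subspace nontrivially) then yields $\ind(Q_0) \leq 1$. For the matching lower bound, the constant $\psi \equiv 1$ satisfies $Q_0(1,1) = -2\pi < 0$, giving $\ind(Q_0) \geq 1$. Combined with Proposition \ref{prop:relind}, this yields $\ind(L) = \ind(H) = 1$ and completes the proof of Theorem \ref{thm:class}. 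No serious obstacle is anticipated; the only technical point is the rigorous justification of the Fourier identities above for general $H^{1/2}$ boundary data, which is routine by density of trigonometric polynomials.
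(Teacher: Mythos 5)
Your proposal is correct and takes essentially the same route as the paper: reduce to harmonic functions via the Dirichlet splitting, diagonalize $Q_0$ on harmonics using the Fourier basis on $\partial\Dd$ to get $Q_0(h,h)=\sum(|k|-1)(\text{coefficients})^2$, observe that only the $k=0$ mode is negative, and close with the constant test function for the lower bound. The only cosmetic differences are that you use complex exponentials and Green's formula $\int_\Dd |\n h|^2=\int_{\partial\Dd} h\,h_\nu$ where the paper uses real $\sin/\cos$ modes and computes the $L^2(\Dd)$ inner products of $\n(r^k e_k)$ directly, and that you phrase the upper bound via a dimension count on the codimension-one mean-zero trace subspace $W$ where the paper phrases it via the $Q_0$-orthogonal decomposition $Q_0(\psi,\psi)=Q_0(\psi-a_\psi,\psi-a_\psi)-2\pi a_\psi^2$; both give $\ind(Q_0)\le 1$ for the same underlying reason.
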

\begin{proof} First, note that if $a_\psi:=\frac{1}{2\pi}\int_{\de \Dd}\psi$ denotes the average value of the trace of $\psi \in H^1(\Dd)$ on $\de \Dd$, then
\begin{equation}\label{lem:calcindex:av}
Q_0(\psi, \psi) = \int_{\Dd} |\n (\psi - a_\psi)|^2 \, dx - \int_{\de \Dd} (\psi - a_\psi + a_\psi)^2 \, d\H^1 = Q_0(\psi-a_\psi,\psi-a_\psi) - 2\pi a_\psi^2. 
\end{equation}
In particular, we see that when $\psi\equiv 1$, the form $Q_0(\psi,\psi)<0$, so that $\ind(Q_0)\geq 1$. On the other hand, since $\psi \to a_\psi$ is a bounded linear functional on $H^1(\Dd)$, its nullspace
\[
W:=\{\psi\in H^1(\Dd): a_\psi = 0\},
\]
is codimension $1$ in $H^1(\Dd)$. Therefore, \eqref{lem:calcindex:av} will imply that $\ind(Q_0)=1$ once we show that 
\begin{equation*}\label{lem:calcindex:main}
Q_0(\psi, \psi) \geq 0 \quad \text{for all } \psi \in W. 
\end{equation*}
By the Dirichlet principle, it suffices to prove that
\begin{equation}\label{lem:calcindex:har}
Q_0(h,h)=\int_{\Dd} |\n h|^2 \, dx - \int_{\de \Dd} h^2 \, d\H^1 \geq 0 \quad \text{for all harmonic functions } h\in W. 
\end{equation}
Since $h\in W\subset H^1(\Dd)$, its zero-mean trace on $\de \Dd$ is in $H^{1/2}(\Dd)$, meaning that the latter has a Fourier series expansion in $\theta \in \de\Dd$:
\begin{equation}\label{lem:calcindex:fourier}
h(\theta) = \sum_{k=1}^\infty \left(c_k e_k(\theta) + d_k f_k(\theta)\right), \quad \text{with} \quad \sum_{k=1}^\infty k (c_k^2 +d_k^2) <\infty,
\end{equation}
where $e_k := (\cos \k \theta)/\sqrt{\pi}$ and $f_k(\theta):=(\sin \k \theta)/\sqrt{\pi}$, $k\in \N$, satisfy:
\begin{equation*}
\langle e_k, e_l \rangle_{L^2(\de \Dd)} = \d_{kl} = \langle  f_k, f_l \rangle_{L^2(\de \Dd)},  \text{ and } \langle e_k, f_l \rangle_{L^2(\de \Dd)} = 0.
\end{equation*}
Now, the harmonic $h$ inside the disk can be written as the series
\begin{equation*}
h(r,\theta) = \sum_{k=1}^{\infty} r^k\left(c_k e_k(\theta) + d_k f_k (\theta)\right),
\end{equation*}
which is convergent in $H^1(\Dd)$ on account of \eqref{lem:calcindex:fourier} and the fact that for all $k,l\in \N$,
\begin{align*}
\langle r^k e_k, r^l e_l \rangle_{L^2(\Dd)} = \d_{kl}/(2k+1) = \langle r^k f_k, r^l f_l \rangle_{L^2(\Dd)} & \text{ and } \langle r^k e_k, r^l f_l \rangle_{L^2(\Dd)}=0, \\
\langle \n (r^k e_k), \n (r^l e_l) \rangle_{L^2(\Dd)} = k \d_{kl} = \langle \n (r^k f_k), \n (r^l f_l) \rangle_{L^2(\Dd)} & \text{ and } \langle \n (r^k e_k), \n (r^l f_l) \rangle_{L^2(\Dd)}=0.
\end{align*}
Therefore, we get
\begin{align*}
Q_0(h,h) = \sum_{k=1}^\infty k (c_k^2 + d_k^2)  - \sum_{k=1}^\infty (c_k^2 + d_k^2) = \sum_{k=1}^\infty (k-1) (c_k^2 + d_k^2) \geq 0,
\end{align*}
which is the desired \eqref{lem:calcindex:har}.

\end{proof}

\appendix

\section{Sobolev trace inequalities}
\begin{prop}\label{prop:trace} Let $u$ be a classical solution of the one-phase FBP in a bounded domain $D\subset \R^n$ and let $\O$ be a connected component of the positive phase $D^+(u)$. Assume that 
\begin{equation}\label{prop:trace:lip}
|\n u| \leq L  \quad \text{in } \O.
\end{equation}
Then 
\begin{equation}\label{prop:trace:L1}
\int_{F(u)\cap \de \O} \psi \, dx \leq L \|\n \psi \|_{L^1(\O)} \quad \text{for all } \psi \in C^1_c(\O\cup (F(u)\cap \de \O)).
\end{equation}
As a result, the following Sobolev $L^2$-trace inequality holds:
\begin{equation}\label{prop:trace:ineq1}
\|T \psi\|^2_{L^2(F(u)\cap \de \O)}\leq 2L \|\psi\|_{L^2(\O)} \|\n \psi\|_{L^2(\O)} \quad \text{for all } \psi \in H^1_F(\O). 
\end{equation}
In particular, we have for any $\eps>0$: 
\begin{equation}\label{prop:trace:ineq2}
\|T \psi\|^2_{L^2(F(u)\cap \de \O)}\leq \eps \|\n \psi\|_{L^2(\O)}^2 + (L^2/\eps) \|\psi\|_{L^2(\O)}^2  \quad \text{for all } \psi \in H^1_F(\O). 
\end{equation}
\end{prop}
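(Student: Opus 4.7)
The plan is to use $\n u$ itself as a canonical calibration vector field on $\O$. The key identities are $\mathrm{div}(\n u) = \D u = 0$ in $\O$, together with the fact that on $F(u)\cap\de\O$ the outer unit normal satisfies $\nu = -\n u/|\n u| = -\n u$, since $|\n u|=1$ on the free boundary. Consequently $\n u \cdot \nu = -1$ pointwise on $F(u)\cap\de\O$, which turns $\n u$ into a unit calibration that forces the free boundary integral of any test function to equal a gradient integral over $\O$.

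I would first prove \eqref{prop:trace:L1} by applying the divergence theorem to the vector field $\psi\,\n u$ on $\O$. Because $\mathrm{supp}\,\psi$ is compactly contained in $\O\cup(F(u)\cap\de\O)$, the function $\psi$ vanishes near $\de D \supseteq \de\O \setminus F(u)$, so only the free boundary portion contributes to the surface integral:
\begin{equation*}
\int_\O \n\psi\cdot \n u\, dx = \int_\O \mathrm{div}(\psi\,\n u)\, dx = \int_{\de\O}\psi\,(\n u\cdot\nu)\, d\H^{n-1} = -\int_{F(u)\cap\de\O}\psi\, d\H^{n-1}.
\end{equation*}
Taking absolute values and invoking the hypothesis $|\n u|\leq L$ immediately yields \eqref{prop:trace:L1}. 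Next, to obtain \eqref{prop:trace:ineq1} for $\psi\in C^1_c(\O\cup(F(u)\cap\de\O))$, I would apply \eqref{prop:trace:L1} to $\psi^2$ (which lies in the same class) and then use Cauchy-Schwarz:
\begin{equation*}
\int_{F(u)\cap\de\O}\psi^2\, d\H^{n-1} \leq L\,\|\n(\psi^2)\|_{L^1(\O)} = 2L\int_\O|\psi||\n\psi|\, dx \leq 2L\,\|\psi\|_{L^2(\O)}\|\n\psi\|_{L^2(\O)}.
\end{equation*}
The extension to arbitrary $\psi\in H^1_F(\O)$ then follows from the density of $C^\infty_c(\O\cup(F(u)\cap\de\O))$ in $H^1_F(\O)$, combined with the continuity of $\psi\mapsto\psi^2$ from $H^1$ into $W^{1,1}$ via the chain rule. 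Finally, \eqref{prop:trace:ineq2} is immediate from \eqref{prop:trace:ineq1} by Young's inequality $2ab\leq \eps a^2 + b^2/\eps$ applied with $a = \|\n\psi\|_{L^2(\O)}$ and $b = L\,\|\psi\|_{L^2(\O)}$.

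The only real subtlety is justifying the divergence theorem in the first step, since $\O$ need not be a Lipschitz domain globally (it could have pieces of $\de D$ on its boundary that are irregular). This is handled by the compactness of $\mathrm{supp}\,\psi$ and the smoothness of $F(u)$: one may replace $\O$ by a smooth compact subdomain whose boundary consists of a relatively open smooth piece of $F(u)\cap\de\O$ containing $\mathrm{supp}\,\psi \cap F(u)$, together with a smooth interior hypersurface enclosing $\mathrm{supp}\,\psi\cap\O$; the latter contributes nothing because $\psi$ vanishes there. All other steps are essentially routine.
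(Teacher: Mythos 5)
Your proof is correct and follows essentially the same route as the paper: integrating $\psi\,\nabla u$ by parts, using $\Delta u = 0$ and $u_\nu = -1$ on the free boundary to obtain \eqref{prop:trace:L1}, then substituting $\psi^2$ and applying Cauchy--Schwarz and Young's inequality. Your remark about justifying the divergence theorem on a smooth compactly-contained subdomain is a useful clarification that the paper leaves implicit.
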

\begin{proof}
Once we establish $\eqref{prop:trace:L1}$, the trace inequality \eqref{prop:trace:ineq1} follows by approximating $\psi\in H^1_F(\O)$ with functions in $C^1_c(\O\cup (F(u)\cap \de \O)$, plugging in $\psi^2$ in \eqref{prop:trace:L1} and using the Cauchy-Schwarz inequality. In turn, \eqref{prop:trace:ineq2} results from applying to \eqref{prop:trace:L1} the inequality $2 ab \leq \eps a^2 + \eps^{-1} b^2$, with $a=\|\n \psi\|_{L^2(\O)}$ and $b=L \|\psi\|_{L^2(\O)}$. 

In order to prove \eqref{prop:trace:L1}, we use the fact that $u\in C^\infty(\overline{\O})$ is harmonic in $\O$ with $u_\nu = -1$ on $F(u)$, and apply the Divergence theorem to the vector field $\psi \n u$, where $\psi \in C^1_c(\O\cup (F(u)\cap \de \O))$:
\begin{align*}
\int_{\de \O\cap F(u)} \psi \, d\H^{n-1} &= \int_{\de \O} \psi (-u_\nu) \, d\H^{n-1} = - \int_\O \div(\psi \n u)\, dx \\
& = - \int_{\O} \n \psi\cdot \n u \,dx - \int_\O \psi \D u \, dx \\
& \leq \int_\O |\n \psi| |\n u| \, dx  \leq L \|\n \psi\|_{L^1(\O)}.
\end{align*}
where the last inequality follows from the Lipschitz bound \eqref{prop:trace:lip}.
\end{proof}

\bibliography{morse_bib}

\end{document}